\documentclass[thmsa,11pt,a4paper]{amsart}
\usepackage[a4paper,left=2.5cm,right=2.5cm,top=2.5cm,bottom=2.5cm]{geometry}%
\usepackage{amsfonts,amsmath,amssymb,amsthm,color}
\usepackage{graphicx}

\newtheorem{theorem}{Theorem}
\newtheorem{lemma}{Lemma}
\newtheorem{corollary}{Corollary}
\newtheorem{proposition}{Proposition}

\newcommand{\R}{{\mathbb R}}
\newcommand{\N}{{\mathbb N}}


\newcommand{\cN}{{\mathcal N}}

\newcommand{\cR}{{\mathcal R}}

\DeclareMathOperator*{\argmin}{arg\,min}
\DeclareMathOperator*{\argmax}{arg\,max}

\newcommand{\rank}{\mathrm{rank}}

\title[Greedy Gauss-Newton algorithm]{Greedy Gauss-Newton algorithm for finding sparse solutions to nonlinear underdetermined systems of equations}



\author[M.~Gulliksson]{M{\aa}rten Gulliksson}
\address{M.~Gulliksson, School of Science and Technology, \"{O}rebro University, Sweden }
\email{marten.gulliksson@oru.se}

\author[A.~Oleynik]{Anna Oleynik}
\address{A.~Oleynik, Department of Mathematical Sciences and Technology, Norwegian University of Life Sciences, Postboks $5003$ NMBU 1432 {\AA}s}
\email{anna.oleynik@nmbu.no}

\subjclass[2000]{68Q25, 68R10, 68U05}
\keywords{sparse optimization, underdetermined nonlinear systems of equations, Gauss-Newton, line search, greedy algorithm, sparsity constraints}

\begin{document}
\maketitle
\begin{abstract}
We consider the problem of finding sparse solutions to a system of underdetermined
nonlinear system of equations. The methods are based on a Gauss-Newton approach with line search
where the search direction is found by solving a linearized problem using only a subset of the columns
in the Jacobian. The choice of columns in the Jacobian is made through a greedy approach looking at
either maximum descent or an approach corresponding to orthogonal matching for linear problems.
The methods are shown to be convergent and efficient and outperform the $\ell_1$ approach on the test
problems presented.
\end{abstract}

\section{Introduction}

We consider the nonlinear underdetermined system of equations
\begin{equation*}
\begin{array}{ccc}
    f_1(x_1,\ldots, x_N) & = & 0\\
		\vdots & &\\
		 f_m(x_1,\ldots, x_N) & = & 0
\end{array}
\end{equation*}
or simply
\begin{equation}
\label{eq:f=0}
  f(x)=0,
\end{equation}
where $x\in \R^N$ and $f: D \subset \R^N \rightarrow \R^m, m<N$ is twice continuously differentiable on the open convex set $D$, i.e., $f_i\in C^2(D), i=1, \ldots , m$. If $0\in f(D)$ the solution to \eqref{eq:f=0} is not unique, which is a direct consequence of the Implicit Function Theorem \cite{Apos}.
We refer to \cite{Ku,Ma89,TroppWright,SunZhengLi2013} for the examples from different application areas as motivation for solving (\ref{eq:f=0}).
In this paper we are interested in sparse solutions to \eqref{eq:f=0}, i.e., solutions that contain only a few nonzero components.
Let $\| x \|_0$ be the so-called $\ell_0$- norm (which is actually not a norm) on $\R^N$ defined as the number of nonzero elements
\[
  \| x \|_0 = \sharp \left\{ i : x_i \neq 0 \right\}.
\]
We say that a vector $x$ is $n$-sparse if $\|x\|_0 \leq n,$ and sparse if $\|x\|_0 \ll m.$

The problem of finding the most sparse solution to \eqref{eq:f=0} reads
\begin{equation}
\label{eq:problem:1}
\begin{array}{l}
    \min_x \| x \|_0\\
		\text{s.t. } f(x) = 0.
\end{array}
\end{equation}
Due to the combinatorial complexity  problem \eqref{eq:problem:1} is considered to be intractable, see \cite{TroppWright},
and  current algorithms can not guarantee that the (sparse) solution attained is a solution to \eqref{eq:problem:1}.

Linear problems, i.e., $f(x) = Ax-b, A \in \R^{m\times N}, b\in \R^{m}$ has been studied extensively.
For algorithms solving the linear sparse solution problem we refer to \cite{TroppWright}. Important references can also be found in \cite{BeckHallak2016}.

To the best of our knowledge there are no numerical algorithms specifically developed to find sparse solutions of \eqref{eq:f=0} except the ones described in \cite{Ku} which we will refer to as the $\ell_1$-method. We will later compare this method with our approach and therefore we describe the method in more detail.
Let $\| x \|_p, 0 < p < \infty$ be given as
\begin{equation}
\label{eq:p-norm}
  \| x \|_p = (\sum_i |x_i|^p)^{1/p}.
	\end{equation}
For $p\geq 1$ \eqref{eq:p-norm} defines the $\ell_p$-norm while for $0 < p < 1$ it is only a quasi-norm. In the sequel, we use $\|\cdot\|$ instead of $\|\cdot\|_2.$

The algorithms in \cite{Ku} are based on solving
\begin{equation}
\label{eq:problem:2}
\begin{array}{l}
    \min_x \| x \|_p\\
		\text{s.t. } f(x)=0
\end{array}
\end{equation}
for $0< p\leq 1$ and $f$ given as above, which is motivated by the fact that $\| x\|_p ^p\rightarrow \| x \|_0, p \rightarrow 0^+$ on a bounded set.
In particular, the $\ell_1$-norm algorithm described in \cite{Ku} is realized in the following way. Starting with $x_1=0$ one obtains a new approximation as $x_{k+1}=x_k+p_k,$ $k=1, 2,3,...,$ where $p_k$ is the solution to
\begin{equation}
\label{l1_lin}
\begin{array}{l}
    \min_p \|  p \|_1\\
		\text{s.t. } f_k + J_k p=0.
\end{array}
\end{equation}
Here we denote $f_k=f(x_k)$ and $J_k=\left( \partial f_i (x_k) / \partial x_j \right)_{ij},$  $i=1,\ldots,m,$ $j=1,\ldots,N$ is the Jacobian of $f(x)$ at $x=x_k$.
The problem \eqref{l1_lin} can be recast as a linear programming problem
\begin{equation}
\label{LP}
\begin{array}{l}
    \min_w c^T w\\
		\text{s.t. } Aw = b,\, \, w\geq 0
\end{array}
\end{equation}
where
\[
  c= {\bf 1}_{2N\times 1},\, A = \left( J_k, -J_k\right), \, b = -f_k, \, w = (u;v), \, p = u-v.
\]
In  \cite{Ku} it was shown that the method converges locally  to a solution (which is not necessarily sparse) with quadratic convergence rate. However,  global convergence was not proven.

There are other methods not directly applied to (\ref{eq:problem:1}) but that contains some ideas and properties  related to our approach and thus relevant to mention here.
In a series of papers \cite{BeckEldar2013,BeckEldar2013Proc,ShechtmanBeckEldar2014,BeckHallak2016} a general theory is developed for the problem
\begin{equation}
\label{eq:Beck}
\begin{array}{l}
    \min_x F(x)\\
		\text{s.t. } x\in C_s \cap B,
\end{array}
\end{equation}
where $F: \R^N\rightarrow \R$, $B$ is a closed and convex set, and $C_s = \left\{x\in \R^n : \| x \|_0 \leq s\right\}$. The theory is used for a number of applications and several algorithms are developed and analyzed. In our context, we note that in \cite{ShechtmanBeckEldar2014} an algorithm, GESPAR (greedy sparse phase retreival), is developed to solve a nonlinear overdetermined least squares problem based on a coordinate search  where the sparse (small) overdetermined nonlinear least squares subproblems are solved using a Gauss-Newton approach with line search. Convergence results for the gradient are derived.

In \cite{BahmaniBoufounosRaj2011} the problem (\ref{eq:Beck}) with $B=\R^n$ is considered with a coordinate search algorithm based on a local gradient search in a sparse solution set (gradient support pursuit). Estimates of the error in the iterates are developed using the size of the elements in the gradient at the sparse solution.

Furthermore, there are combinatorial methods that solve the nonlinear problem \eqref{eq:f=0} using cardinality constrains, see \cite{SunZhengLi2013}, which we do not consider here.

Here we present an alternative method, that we call a Greedy Gauss-Newton algorithm, that combines a greedy approach with the Gauss-Newton method \cite{Bj96}. The method is based on a line search where at  the $k$'th iterate we set
\begin{equation}
\label{eq:linesearch}
   x_{k+1} = x_k + \alpha_k p_k, \quad k=1,2, \ldots,
\end{equation}
where $p_k$ is the search direction and $\alpha_k$ is the step length. We start the iterations with $x_1=0$ or $x_1$ sparse enough. In every iteration we use the matrix $L_k$ consisting of the  columns of $J_k$ corresponding to the nonzero part of  $x_k$ and an additional  column of $J_k$, $J_k(:,t)$, to calculate the search direction  as $p_t = \argmin_p \| f_k - \left( L_k,J(:,t)\right)p \|$. The choice of $t$ is discussed and we analyze the two choices in detail. The first one is based on maximizing the descent of  $\| f(x) \|_2^2$ at $x=x_k$ in the direction $p_t$ and we call this method Maximum Descent (MD). The second idea of choosing $t$ is similar to orthogonal matching on the linear problem $\min \| f_k + J_k p \|_2$, see \cite{TroppWright}, and   consists of maximizing the angle between  $r_k = f_k + L_k L_k^+ f_k$ and $J_k(:,t)$, where $L_k^+$ is the pseudo inverse of $L_k$ \cite{IsraelGreville2003}. We denote our method based on orthogonal matching as OM.

The paper is organized as follows. In Section \ref{Sec:alg} we describe how to calculate $p_t$ and we show that it is a descent direction together with some useful corollaries. The MD algorithm is presented in Section \ref{Sec:MD} and OM in  Section \ref{Sec:OM}. In Section  \ref{Sec:con} we show results on global and local convergence together with the algorithm in pseudocode, and finally we give some numerical tests in Section \ref{Sec:num}.

\section{The algorithm}
\label{Sec:alg}

Here we describe the line search method \eqref{eq:linesearch} to find a sparse solution to \eqref{eq:f=0}. We start with $x_1=0$ or some sufficiently sparse vector.

At iteration $k,$  let $x_k$ contain $n_k$ nonzero elements at positions $i \in \Omega_k$ and zero elements at $i \in \overline{\Omega}_k$ where
\[
   \Omega_k = \left\{ i_1, i_2, \ldots , i_{n_k} \right\}, \, \, \overline{\Omega}_k = \left\{ 1, 2, \ldots , N \right\}\backslash \Omega_k.
\]

We use Matlab \cite{GolubVanLoan2013} inspired notation, that is,
$$x(i)=x_i,\quad x(:)=x, \quad x(1:n_k)=(x_1,...,x_{n_k})^T, \, \mbox{ and } x(\Omega_k) = \left(x_{i_1},\ldots,x_{i_{n_k}}\right)^T.$$

We aim at finding $p_k$  in \eqref{eq:linesearch} such that (i) $p_k$ is a descent direction and (ii) the update $x_{k+1}=x_k+ \alpha_k p_k$ is $(n_k+1)$-sparse for any $\alpha_k \in \R.$
The most straightforward approach would be to solve the linearized problem to \eqref{eq:f=0}, that is,
\begin{equation}
\label{eq:linprob}
    f_k + J_k \,p_k = 0.
\end{equation}
However, solving  \eqref{eq:linprob} for a sparse $p_k$ is not efficient enough for large $N$, \cite{TroppWright}.
Thus, for every $t\in \overline{\Omega}_k$ we define a projection $\Pi^t_k$ as
\begin{equation*}
\Pi_k^t(i,j)=\left\{
\begin{array}{ll}
1, & i=j \mbox{ and } i \in \Omega_k \cup \{ t \}, \\
\\
0,& \mbox{otherwise},
\end{array}\right.
\end{equation*}
where $i,j=1,\ldots,N.$  Then instead of \eqref{eq:linprob} we solve the minimization problem
\begin{equation}
\label{eq:linprob:t}
\begin{array}{l}
    \min \limits_p \frac{1}{2}\| p \|^2 \\
		\text{s.t. }  \min \limits_p \frac{1}{2}\|f_k + J_k \Pi^{t}_k p\|^2,
\end{array}
\end{equation}
with $t\in \overline{\Omega}_k$  to obtain $p_k.$ We choose $t\in \overline{\Omega}_k$  by two different methods: MD, $t=t_{MD}$, or  OM, $t=t_{OM}$, that we describe in details in the coming subsections.

 Let $p_t$ be a solution of \eqref{eq:linprob:t} for  $t \in \overline{\Omega}_k.$ It is clear that $p_t$ for any $t \in \overline{\Omega}_k$ satisfies the sparsity requirement (ii). Indeed,  $p_{t}(\overline{\Omega}_k \setminus \{t\})=0$ for any $t\in \overline{\Omega}_k.$  Below we discuss when $p_t$ is a descent direction.

 Denote $L_k=J_k(:,\Omega_k),$ then the remaining non-zero part of $p_t$, that is, $q_t =(p_{t}({\Omega}_k)^T$, $p_t(t))^T \in \R^{n_k+1}$ is the solution to
   \begin{equation}
    \label{eq:linprob:t:q}
    \begin{array}{l}
        \min \limits_{q} \frac{1}{2}\| q\|^2 \\
        \text{s.t. } \min\limits_q \frac{1}{2}\| f_k + \big(L_k,  J(:,t)\big) q\|^2,
    \end{array}
    \end{equation}
that is,

 \begin{equation}
 \label{eq:qt}
 q_t=-\big(L_k, J_k(:,t)\big)^{+}f_k.
  \end{equation}
Note that $q_t$ is the unique minimum of $\| f_k + \big(L_k,  J(:,t)\big) q\|$  if  $\rank \left(\big(L_k, J_k(:,t)\big) \right) \geq n_k+1.$


\begin{lemma} \label{lemma:descent}
Let  $p_t$ be a solution of \eqref{eq:linprob:t}, $J_k$ and $f_k$ be given as above. Then
\begin{equation} \label{eq:descent}
-p_t^TJ_k^Tf_k =f_k^T L_k L_k^+ f_k+ \dfrac{|f_k^T(I-L_k L_k^+)J_k(:,t)|^2}{\|(I-L_k L_k^+)J_k(:,t)\|^2}
\end{equation}
and $p_t$ is a descent direction of $1/2\|f(x)\|^2$ at $x=x_k$ if and only if
\begin{equation} \label{ineq:descent}
f_k^T L_k L_k^+ f_k+ \dfrac{|f_k^T(I-L_k L_k^+)J_k(:,t)|^2}{\|(I-L_k L_k^+)J_k(:,t)\|^2}>0.
\end{equation}
\end{lemma}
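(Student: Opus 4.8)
The plan is to compute the directional derivative $\nabla\bigl(\tfrac12\|f(x)\|^2\bigr)\big|_{x_k}^{T}p_t = (J_k^{T}f_k)^{T}p_t$ directly from the closed form of $p_t$, and then read off the descent condition. Since $p_t$ is the minimum-norm least-squares solution of $f_k + (L_k, J_k(:,t))q = 0$ padded with zeros on $\overline\Omega_k\setminus\{t\}$, we have $J_k^{T}f_k \cdot p_t = \bigl((L_k, J_k(:,t))^{T}f_k\bigr)^{T}q_t$ with $q_t = -(L_k, J_k(:,t))^{+}f_k$ as in \eqref{eq:qt}. So the quantity to evaluate is $-p_t^{T}J_k^{T}f_k = f_k^{T}(L_k, J_k(:,t))(L_k, J_k(:,t))^{+}f_k = f_k^{T}P f_k$, where $P$ is the orthogonal projector onto the column space of the augmented matrix $(L_k, J_k(:,t))$.

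The key step is then to decompose this projector. First I would write the column space of $(L_k, J_k(:,t))$ as the orthogonal direct sum of $\mathrm{range}(L_k)$ and the one-dimensional space spanned by $w := (I - L_kL_k^{+})J_k(:,t)$, the component of the new column orthogonal to $\mathrm{range}(L_k)$. Provided $w \neq 0$ (equivalently $\rank(L_k, J_k(:,t)) = n_k+1$, so that the denominator in \eqref{eq:descent} is nonzero and $q_t$ is the genuine unique minimizer), the projector splits as $P = L_kL_k^{+} + \dfrac{ww^{T}}{\|w\|^2}$. Substituting this into $f_k^{T}Pf_k$ and using $f_k^{T}ww^{T}f_k = |w^{T}f_k|^2 = |f_k^{T}(I-L_kL_k^{+})J_k(:,t)|^2$ (here one uses that $I - L_kL_k^{+}$ is a symmetric idempotent, so $f_k^{T}(I-L_kL_k^{+})J_k(:,t) = ((I-L_kL_k^{+})f_k)^{T}(I-L_kL_k^{+})J_k(:,t)$, matching $w^{T}f_k$) yields exactly \eqref{eq:descent}. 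The descent characterization \eqref{ineq:descent} is then immediate: $p_t$ is a descent direction of $\tfrac12\|f(x)\|^2$ at $x_k$ precisely when $-p_t^{T}J_k^{T}f_k = \nabla(\tfrac12\|f\|^2)^{T}(-p_t)\cdot(-1) > 0$, i.e. when the right-hand side of \eqref{eq:descent} is positive.

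I would also note the degenerate case $w = 0$: then $J_k(:,t) \in \mathrm{range}(L_k)$, the augmented matrix has the same range as $L_k$, and $P = L_kL_k^{+}$, so $-p_t^{T}J_k^{T}f_k = f_k^{T}L_kL_k^{+}f_k$, consistent with interpreting the fraction in \eqref{eq:descent} as $0$; since $L_kL_k^{+}$ is a projector this is $\|L_kL_k^{+}f_k\|^2 \geq 0$, with strict descent iff $L_kL_k^{+}f_k \neq 0$. The main obstacle is not conceptual but bookkeeping: being careful that $q_t$ as defined by the pseudoinverse really does realize the value $f_k^{T}Pf_k$ (this is the standard identity $b^{T}A A^{+}b = \|AA^{+}b\|^2 = \|Pb\|^2$ for the orthogonal projector $P = AA^{+}$), and correctly justifying the projector decomposition $P_{(L_k,\,J_k(:,t))} = P_{L_k} + P_{w}$, which rests on the fact that appending a column changes the range only by the orthogonal complement direction $w$. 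Everything else is routine linear algebra with pseudoinverses.
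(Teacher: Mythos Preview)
Your argument is correct. The key identity you use, $-p_t^T J_k^T f_k = f_k^T (L_k,a)(L_k,a)^+ f_k$, follows exactly as you say from $q_t = -(L_k,a)^+ f_k$ and the symmetry of $(L_k,a)(L_k,a)^+$, and the projector decomposition $P_{(L_k,a)} = L_kL_k^+ + ww^T/\|w\|^2$ with $w=(I-L_kL_k^+)a$ is a standard and cleanly justified step.

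Your route differs from the paper's. Rather than decomposing the projector onto $\mathrm{range}(L_k,a)$ via the orthogonal splitting $\mathrm{range}(L_k)\oplus\mathrm{span}(w)$, the paper invokes an explicit block formula for the pseudoinverse of a column-augmented matrix,
\[
(L_k,a)^+ = \begin{pmatrix} L_k^+ - L_k^+ a\, b^T \\ b^T \end{pmatrix}, \qquad b^T = (Pa)^+ \ \text{when } Pa\neq 0,
\]
cited from Ben-Israel and Greville, and then multiplies out $-q_t^T(L_k,a)^T f_k$ using this expression. The two computations are of course equivalent, but yours is more self-contained: it avoids the external reference and works directly with the geometric content (the new column contributes only its component orthogonal to $\mathrm{range}(L_k)$). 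The paper's approach has the mild advantage that the block formula for $(L_k,a)^+$ is reused later (e.g.\ in the proof of Lemma~\ref{lemma:MD}), so importing it once amortizes. Your explicit treatment of the degenerate case $w=0$ is also slightly cleaner than the paper's, which states the alternative formula for $b$ but does not carry the case through.
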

\begin{proof}
Let $t\in \overline{\Omega}_k$, $a=J_k(:,t)$, and $P=I-L_k L_k^+.$  Observe that $L_kL_k^+$ and $P$ define the orthogonal projections on $\cR (L_k)$ and $\cR (L_k)^{\bot},$ respectively.

We have
\[
-p_t^TJ_k^Tf_k=-q_t^T \big(L_k,a\big)^+f_k.
\]

Theorem 2 in Ch.7 Section 5 in \cite{IsraelGreville2003} yields
\begin{equation}\label{eq:pinv:[Lk,a]}
\big(L_k,a\big)^+=
\begin{pmatrix} \
L_k^+ -L_k^+ab^T\\
b^T
\end{pmatrix}
\end{equation}
where
\[
b^T=\left\{
\begin{array}{cl}
(Pa)^+, &Pa\not=0,\\
\\
\dfrac{a^T (L_k^+)^T L_k^+}{1+a^T (L_k^+)^T L_k^+ a},& Pa\not=0.
\end{array}
\right.
\]

Thus,  using \eqref{eq:qt}  and \eqref{eq:pinv:[Lk,a]} we obtain
\begin{equation}
-p_t^TJ_k^Tf_k=-q_t^T \big(L_k,a\big)^+f_k=f_k^T L_k L_k^+ f_k+ \dfrac{|f_k^TPa|^2}{\|Pa\|^2}.
\end{equation}
The second claim of the lemma follows from \eqref{eq:descent} and the definition of a descent direction.
\end{proof}

\begin{corollary}
\label{cor:desc}
The solution $p_t$ of \eqref{eq:linprob:t} is a descent direction of $1/2\|f(x)\|^2$ at $x=x_k$ unless
$f_k \in \cR (L_k)^{\perp}$ and $J_k(:,t)\in \cR (L_k)$ simultaneously.
\end{corollary}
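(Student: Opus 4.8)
The plan is to read off the corollary directly from the characterization of descent given in Lemma~\ref{lemma:descent}. By that lemma, $p_t$ is a descent direction of $\tfrac12\|f(x)\|^2$ at $x=x_k$ precisely when
\[
f_k^T L_k L_k^+ f_k+ \dfrac{|f_k^T(I-L_k L_k^+)J_k(:,t)|^2}{\|(I-L_k L_k^+)J_k(:,t)\|^2}>0,
\]
so it suffices to show that the left-hand side is $\ge 0$ always, and that it vanishes exactly when $f_k\in\cR(L_k)^{\perp}$ and $J_k(:,t)\in\cR(L_k)$.

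First I would note that both summands are nonnegative: $L_kL_k^+$ is the orthogonal projector onto $\cR(L_k)$, so $f_k^T L_k L_k^+ f_k=\|L_kL_k^+ f_k\|^2\ge 0$, and the second term is a ratio of squared norms (the denominator being positive whenever $P J_k(:,t)\neq 0$, where $P=I-L_kL_k^+$; if $PJ_k(:,t)=0$ the term is absent/interpreted as $0$). Hence the sum is $\ge 0$, and it equals $0$ if and only if both terms are $0$. The first term is $0$ iff $L_kL_k^+ f_k=0$, i.e. $f_k\in\cR(L_k)^{\perp}$. Given that, the numerator of the second term is $f_k^T P J_k(:,t)=f_k^T J_k(:,t)$ (since $Pf_k=f_k$), which would have to vanish as well; but more cleanly, for the second term to be $0$ we need $PJ_k(:,t)=0$ (the case where the term is simply absent), i.e. $J_k(:,t)\in\cR(L_k)$. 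Conversely, if $f_k\in\cR(L_k)^{\perp}$ and $J_k(:,t)\in\cR(L_k)$, then $L_kL_k^+f_k=0$ and $PJ_k(:,t)=0$, so the left-hand side is $0$ and $p_t$ fails to be a descent direction. Combining, strict positivity---equivalently, $p_t$ being a descent direction---fails exactly on this simultaneous condition, which is the claim.

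The only subtlety, and the single point I would be careful about, is the degenerate case $PJ_k(:,t)=0$: the formula in \eqref{eq:descent}--\eqref{ineq:descent} has $\|Pa\|^2$ in a denominator, so one must record the convention that when $Pa=0$ the second term is taken to be $0$ (which is consistent with \eqref{eq:descent} read as a limit, and with $b^T$ in the proof of Lemma~\ref{lemma:descent} switching to its second branch). With that convention the argument above is complete and essentially a one-line logical unpacking of the lemma; there is no real obstacle beyond bookkeeping the $Pa=0$ branch.
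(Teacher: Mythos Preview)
Your approach is exactly the paper's: read the corollary off Lemma~\ref{lemma:descent} by observing that both summands in \eqref{ineq:descent} are nonnegative (since $L_kL_k^+$ is the orthogonal projector onto $\cR(L_k)$) and then identifying when each vanishes. You are also more careful than the paper in flagging the $Pa=0$ branch explicitly.

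That said, there is a small logical slip in your argument, and the paper's own proof shares it. You write that ``for the second term to be $0$ we need $PJ_k(:,t)=0$,'' but with $P=I-L_kL_k^+$ and $a=J_k(:,t)$ the ratio $|f_k^TPa|^2/\|Pa\|^2$ vanishes whenever $f_k\perp Pa$, which can occur with $Pa\neq 0$. Concretely, if $f_k\in\cR(L_k)^\perp$ (so the first summand is zero and $Pf_k=f_k$) and $a\notin\cR(L_k)$ but $f_k^Ta=0$, then $f_k^TPa=f_k^Ta=0$, both summands vanish, and $p_t$ is \emph{not} a descent direction even though $J_k(:,t)\notin\cR(L_k)$. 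Your own parenthetical (``the numerator \ldots\ would have to vanish as well'') already isolates the correct necessary condition; the subsequent ``more cleanly, $PJ_k(:,t)=0$'' is precisely the step that is not justified. The paper's one-line ``the last term \ldots\ is equal to zero only if $a\in\cR(L_k)$'' makes the identical jump, so your write-up matches the paper in both strategy and in this gap.
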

\begin{proof}
Observe that $L_kL_k^+$ is positive semi-definite. Thus the two last terms  in \eqref{ineq:descent} are non-negative. Assume that $f_k^T L_k L_k^+ f_k=0.$ Then $f_k \in \cR(L_k)^{\perp}$ and the last term in \eqref{eq:descent} is equal to zero only if $a \in \cR(L_k).$
\end{proof}

It is  clear from \eqref{eq:descent} that adding an extra column of $J_k$ will improve the descent as long as the added column does not belong to $\cR(L_k)$. We formulate it as a corollary.
\begin{corollary}
The descent of $p_t,$ $t\in \overline{\Omega}_k$ is not less than the descent of $p_0$ where $p_0(\overline{\Omega}_k)=0$ and $p_0(\Omega_k)$ is the solution to
\begin{equation}
    \begin{array}{l}
        \min \limits_{d} \frac{1}{2}\left\| x(\Omega_k)+ d\right\|^2 \\
        \\
    	\text{s.t. } \min\limits_d \frac{1}{2}\| f_k + L_k d\|^2.
    \end{array}
    \end{equation}
\end{corollary}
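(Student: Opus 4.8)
The plan is to derive the statement directly from the identity \eqref{eq:descent} in Lemma~\ref{lemma:descent}. Recall that the ``descent'' of a direction $p$ at $x=x_k$ is the scalar $-p^TJ_k^Tf_k$ (minus the directional derivative of $\tfrac12\|f(x)\|^2$), so I must show $-p_t^TJ_k^Tf_k\ge-p_0^TJ_k^Tf_k$ for every $t\in\overline{\Omega}_k$. The first step is to compute the descent of $p_0$. Since $p_0(\overline{\Omega}_k)=0$ we have $J_kp_0=L_k\,p_0(\Omega_k)$, hence $-p_0^TJ_k^Tf_k=-p_0(\Omega_k)^TL_k^Tf_k$. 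Now $p_0(\Omega_k)$ is some least-squares solution of $\min_d\tfrac12\|f_k+L_kd\|^2$, and every least-squares solution $d$ of that problem satisfies $L_kd=-L_kL_k^+f_k$, the orthogonal projection of $-f_k$ onto $\cR(L_k)$; therefore, using that $L_kL_k^+$ is symmetric,
\[
-p_0^TJ_k^Tf_k=-(L_kp_0(\Omega_k))^Tf_k=f_k^TL_kL_k^+f_k,
\]
irrespective of which minimizing selection $p_0(\Omega_k)$ is. In particular this equals the first term on the right-hand side of \eqref{eq:descent}.

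The second step is then immediate. By Lemma~\ref{lemma:descent}, for any $t\in\overline{\Omega}_k$ with $(I-L_kL_k^+)J_k(:,t)\neq 0$,
\[
-p_t^TJ_k^Tf_k=f_k^TL_kL_k^+f_k+\frac{|f_k^T(I-L_kL_k^+)J_k(:,t)|^2}{\|(I-L_kL_k^+)J_k(:,t)\|^2}\ \ge\ f_k^TL_kL_k^+f_k=-p_0^TJ_k^Tf_k,
\]
since the added term is a square divided by a positive number, hence non-negative; equality holds precisely when $f_k^T(I-L_kL_k^+)J_k(:,t)=0$.

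The only point that needs a separate word is the degenerate case $(I-L_kL_k^+)J_k(:,t)=0$, i.e.\ $J_k(:,t)\in\cR(L_k)$, in which the ratio in \eqref{eq:descent} is $0/0$. There I would note that adjoining a column already in $\cR(L_k)$ does not enlarge the column space, so $(L_k,J_k(:,t))(L_k,J_k(:,t))^+=L_kL_k^+$; running the computation of the first step with $(L_k,J_k(:,t))$ in place of $L_k$ then gives $-p_t^TJ_k^Tf_k=f_k^TL_kL_k^+f_k=-p_0^TJ_k^Tf_k$, so the inequality (with equality) still holds. I do not anticipate any genuine obstacle: once $p_0$ is recognized as an arbitrary least-squares selection for $L_k$ with descent pinned to $f_k^TL_kL_k^+f_k$, the corollary is just the observation that the second, non-negative term in \eqref{eq:descent} can only help.
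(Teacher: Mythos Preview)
Your proposal is correct and follows essentially the same approach as the paper: compute $-p_0^TJ_k^Tf_k=f_k^TL_kL_k^+f_k$ and then compare with the identity \eqref{eq:descent} to see that the extra term is non-negative. You supply more detail than the paper (which simply writes ``simple calculations show'') and you also treat the degenerate case $(I-L_kL_k^+)J_k(:,t)=0$ explicitly, which the paper does not address.
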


\begin{proof}
Simple calculations show that $-p_0^TJ_k^Tf_k=f_k^T L_k L_k^+ f_k.$ Together with
\eqref{eq:descent} it implies  $p_0^TJ_k^T-p_t^TJ_k^T\geq 0.$
\end{proof}

After $p_k$ is constructed, the step length $\alpha_k$ is found by using a standard step length algorithm, see \cite{Kelley1999}, that satisfy the Goldman-Anmijo rule. We get $x_{k+1} = x_k + \alpha_k p_k$ that has at least $n_k + 1$ nonzero elements
and $\Omega_{k+1}=\Omega_k \cup \{t_*\}.$ If the step length $\alpha_k$ is too small it  indicates that the descent is insufficient and we restart the algorithm with a sparse enough $x_1$ where the positions and the values of nonzero elements are chosen randomly, see Section \ref{Sec:Pseudo}.

If there are elements in $x_{k+1}$ close to zero it could make sense to put these values to zero and then recalculate the set of non-zero entries  $\Omega_{k+1}$. This approach would be however very much problem dependent and we do not consider it here.
\subsection{Maximum descent method (MD)}
\label{Sec:MD}

MD is based on choosing $p_k=p_{t_{MD}}$ where
\[
t_{MD}=\argmax\limits_{t\in \overline{\Omega}_k} \left(-p_t^T J_k^T f_k\right)
\]
or, equivalently,
\begin{equation}
\label{eq:tMDqt}
t_{MD}=\argmax\limits_{t\in \overline{\Omega}_k} \left(-q_t^T \big(L_k,J(:,t)\big)^T f_k\right).
\end{equation}

The next lemma gives us the explicit formula for computing $t=t_{MD}.$
\begin{lemma} \label{lemma:MD}
Let $p_t$ be the solution to \eqref{eq:linprob:t} for $t\in \overline{\Omega}_k.$
If there exists a $t\in \overline{\Omega}_k$ such that $p_t$  is a descent direction of $1/2 \|f(x)\|^2$ at $x_k$, then the maximum descent direction is given as $p_k=p_{t_{MD}}$ where
\begin{equation} \label{eq:tMD}
t_{MD}=\argmax \limits_t \frac{\left| f_k^T (I-L_kL_k^+) J_k(:,t)\right|}{\|(I-L_kL_k^+)J_k(:,t)\|}.
\end{equation}

Moreover, $p_{t_{MD}}$ provides the minimum of the norm $\|f_k+J_kp_t\|,$ i.e.,
\[
p_{t_{MD}}=\argmin\limits_{p_t}\|f_k+J_kp_t\|.
\]
\end{lemma}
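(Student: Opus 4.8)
The plan is to reduce both assertions to Lemma~\ref{lemma:descent}; write $a=J_k(:,t)$ and $P=I-L_kL_k^+$, as in the proof of that lemma. For the formula \eqref{eq:tMD}, I would start from \eqref{eq:descent}, which gives the descent as
\[
-p_t^TJ_k^Tf_k=f_k^TL_kL_k^+f_k+\frac{|f_k^TPa|^2}{\|Pa\|^2},
\]
the first summand being independent of $t$. Hence the maximizer $t_{MD}$ of the descent, defined in \eqref{eq:tMDqt}, is exactly the maximizer over $t\in\overline{\Omega}_k$ of $|f_k^TPa|^2/\|Pa\|^2$; as this quantity is nonnegative and $\sqrt{\,\cdot\,}$ is increasing on $[0,\infty)$, it is also the maximizer of $|f_k^TPa|/\|Pa\|$, which is \eqref{eq:tMD}. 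The degenerate indices $t$ with $Pa=0$ I would handle separately: for them $f_k^TPa=0$ as well, so the descent reduces to $f_k^TL_kL_k^+f_k$, and such indices may be dropped from the $\argmax$ without changing it as long as some non-degenerate index yields a strictly larger value. This is precisely what the hypothesis ``there exists $t$ such that $p_t$ is a descent direction'' secures: if $f_k^TL_kL_k^+f_k>0$ every $p_t$ is already a descent direction and the claim is only about which $t$ gives the largest descent, whereas if $f_k^TL_kL_k^+f_k=0$ then, by Corollary~\ref{cor:desc}, the existence of a descent direction forces some $t$ with $Pa\neq0$ and $f_k^TPa\neq0$, i.e.\ a strictly positive ratio.

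For the second assertion I would compute the residual directly. Since $p_t$ is supported on $\Omega_k\cup\{t\}$, we have $J_kp_t=\big(L_k,a\big)q_t$, and $q_t=-\big(L_k,a\big)^+f_k$ by \eqref{eq:qt}, so
\[
f_k+J_kp_t=\left(I-\big(L_k,a\big)\big(L_k,a\big)^+\right)f_k .
\]
The matrix $\big(L_k,a\big)\big(L_k,a\big)^+$ is the orthogonal projector onto the range of $\big(L_k,a\big)$ — this holds whatever the rank of $\big(L_k,a\big)$, since $q_t$ in \eqref{eq:qt} is the minimum-norm least-squares solution of \eqref{eq:linprob:t:q} — so by the Pythagorean theorem
\[
\|f_k+J_kp_t\|^2=\|f_k\|^2-f_k^T\big(L_k,a\big)\big(L_k,a\big)^+f_k .
\]
On the other hand, inserting $q_t=-\big(L_k,a\big)^+f_k$ and using the symmetry and idempotence of the projector,
\[
-p_t^TJ_k^Tf_k=-q_t^T\big(L_k,a\big)^Tf_k=f_k^T\big(L_k,a\big)\big(L_k,a\big)^+f_k .
\]
Combining the last two displays yields $\|f_k+J_kp_t\|^2=\|f_k\|^2+p_t^TJ_k^Tf_k$, so minimizing $\|f_k+J_kp_t\|$ over $t\in\overline{\Omega}_k$ is the same as maximizing the descent $-p_t^TJ_k^Tf_k$ over $t$; hence the minimum is attained at $t=t_{MD}$, which is the claim $p_{t_{MD}}=\argmin_{p_t}\|f_k+J_kp_t\|$.

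The projector identities used above (symmetry, idempotence, Pythagoras) are immediate from the defining properties of the pseudoinverse, so the only step that genuinely needs care is the degenerate case $Pa=0$ in the first assertion, where the ratio in \eqref{eq:tMD} takes the indeterminate form $0/0$. That is the one place where the descent-direction hypothesis enters in an essential way: it is what makes the $\argmax$ in \eqref{eq:tMD} well defined and guarantees that the resulting $p_{t_{MD}}$ is in fact a descent direction.
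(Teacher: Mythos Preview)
Your proof is correct. For the first assertion you take the same route as the paper: invoke \eqref{eq:descent}, note that $f_k^TL_kL_k^+f_k$ is independent of $t$, and conclude that maximizing descent is the same as maximizing $|f_k^TPa|/\|Pa\|$. Your treatment of the degenerate case $Pa=0$ is more explicit than the paper's, which simply ignores it.

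For the second assertion you take a genuinely different route. The paper computes the residual norm explicitly: it uses the block pseudoinverse formula \eqref{eq:pinv:[Lk,a]} to write $(L_k,a)(L_k,a)^+=L_kL_k^++S$ with $S=Paa^TP/\|Pa\|^2$, expands $\|(P-S)f_k\|^2$, and arrives at the closed form
\[
\|f_k+J_kp_t\|^2=f_k^TPf_k-\frac{|f_k^TPa|^2}{\|Pa\|^2},
\]
from which the minimizer is read off. You instead bypass the block formula entirely and derive the identity $\|f_k+J_kp_t\|^2=\|f_k\|^2+p_t^TJ_k^Tf_k$ from the symmetry and idempotence of $(L_k,a)(L_k,a)^+$ alone; this reduces the minimization directly to the maximization already handled in the first part. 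Your argument is cleaner and works regardless of whether $Pa=0$, while the paper's explicit formula has the advantage of displaying the residual norm in terms of $P$ and $a$, which is later reused when comparing MD with OM.
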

\begin{proof}
Let $t\in \overline{\Omega}_k$, $a=J_k(:,t)$,  $P=I-L_k L_k^+$, and $S=Pa a^T P/\|Pa\|^2$, where $P$ and $S$ define the orthogonal projections on $\cR (L_k)^{\bot}$ on $\cR(Pa),$ respectively.
Descent is given by \eqref{eq:descent} where
the first term in the right hand side, $f_k^TL_kL_k^+f_k$, does not depend on $a$ and thus the maximum descent  is achieved when $|f_k^TPa|/\|Pa\|$ is maximum. Thus, we obtain the expression in \eqref{eq:tMD}.

To prove the second claim of the theorem we compute the squared norm using
the expression for $q_t$ in \eqref{eq:qt}
\begin{equation} \label{eq:norm:implicit}
\|f_k+J_k \Pi_k^t p_t\|^2=\|f_k+ \big(L_k,a\big) q_t\|^2=
\left\|\left(I-\big(L_k,a\big)\big(L_k,a\big)^+\right)f_k\right\|^2.
\end{equation}

Using \eqref{eq:pinv:[Lk,a]} we obtain
\begin{equation}
\begin{split}
&\left\|\left(I-\big(L_k,a\big)\big(L_k,a\big)^+\right)f_k\right\|^2=\|(P-Pab^T)f_k\|^2=\|(P-S)f_k\|^2\\
&=fk^T(P^2-PS)f_k=f_k^TPf_k-\frac{|f_k^TPa|^2}{\|Pa\|^2} \geq 0
\end{split}
\end{equation}
The term $f_k^TPf_k$ does not depend on $a$ and the norm $\|f_k+J_k \Pi_k^t p_t\|$ reaches its minimum when $|f_k^TPa|/\|Pa\|$ is maximum.
\end{proof}

From Corollary \ref{cor:desc} and Lemma \ref{lemma:MD} it is clear that $p_k =p_{t_{MD}}$ is always a descent direction if $\rank(J_k)>\rank(L_k).$

Let us assume that $q_t$ in (\ref{eq:qt}) is calculated with a QR-decomposition, see \cite{GolubVanLoan2013}, $N\gg  m \gg n_k$,  and  $t_{MD}$ is calculated using (\ref{eq:tMDqt}). Then  the complexity (number of flops, i.e., one addition, subtraction,
multiplication, or division of two floating-point numbers) of MD in  iteration $k$ is $2mn_k^2 +(m+1)(n_k+1)(N-n_k)$. If instead we use \eqref{eq:tMD}, the complexity is $2m(n_k+1)(N-n_k)$. Assuming that the term including $N-n_k$ is the largest the complexity of MD can be reduced by accepting a descent large enough without considering the whole set $\overline{\Omega}_k$. However, we have not considered this generalization here.

\subsection{Orthogonal matching method (OM)}
\label{Sec:OM}

Let $L_k = J_k(:,\Omega_k)$ as before and consider

\begin{equation}
\label{eq:linmin:explicit}
\begin{array}{l}
    \min\limits_d \frac{1}{2}\| d\|^2 \\
		\text{s.t. } \min\limits_d \frac{1}{2}\| f_k + L_k d\|^2.
\end{array}
\end{equation}

 The solution of \eqref{eq:linmin:explicit} is $d_k=-L_k^{+}f_k$ which is the unique minimum to $\| f_k + L_k d\|$ if $\rank(L_k) \geq n_k$, and the minimum norm solution otherwise.

OM aims at finding the column $J_k(:,t_{OM})$ that is the most strongly correlated with the linear residual $r_k=f_k+L_kd_k$, i.e.,
\[
t_{OM}=\argmax\limits_{t\in \overline{\Omega}_k} \left|r_k^T \dfrac{J_k(:,t)}{\|J_k(:,t)\|}\right|
\]
or equivalently,
\begin{equation}
\label{eq:tOM}
t_{OM}=\argmax\limits_{t\in \overline{\Omega}_k} \dfrac{\left|f_k^T (I-L_kL_k^+)J_k(:,t)\right|}{\|J_k(:,t)\|},
\end{equation}
to obtain $p_k=p_{t_{OM}}.$

Following the assumptions made for MD regarding complexity analysis we get the complexity of OM to be $4mn_k^2 + 2m(N-n_k)$ where the first term is the calculation of $r_k$ and $q_{t_{OM}}$ in (\ref{eq:qt}) and the second is from solving the maximization problem in (\ref{eq:tOM}).


Let us consider \eqref{eq:linprob:t} where we set $p(\Omega_k)=d_k,$ that is,
\begin{equation}
\label{eq:linprob:t:newp}
\begin{array}{l}
    \min \limits_p \frac{1}{2}\| p \|^2\\
    \text{ s.t. }
            \left\{
            \begin{array}{l}
            \min \limits_p \frac{1}{2}\|f_k + J_k \Pi^{t}_k p\|^2\\
            p(\Omega_k)=d_k
            \end{array}
            \right.
\end{array}
\end{equation}

 Then \eqref{eq:linprob:t:q} can be
rewritten as
\begin{equation*}
\begin{array}{l}
    \min\limits_\delta \frac{1}{2} \left\|
        \begin{pmatrix}
    d_k\\
    \delta
    \end{pmatrix}
    \right\|^2\\
    \\
		\text{s.t. } \min\limits_\delta \frac{1}{2}\| f_k + L_k d_k+J(:,t)\delta\|^2,
\end{array}
\end{equation*}

or, equivalently,
\begin{equation}
\label{eq:delta}
\begin{array}{l}
    \min\limits_\delta |\delta| \\
		\text{s.t. } \min\limits_\delta \frac{1}{2}\| (I-L_k L_k^+)f_k + J(:,t)\delta\|_2
\end{array}
\end{equation}
with the solution
\begin{equation}
\label{eq:delta:t}
\delta_t=-\frac{J_k(:,t)^T}{\|J_k(:,t)\|^2} (I-L_k L_k^{+})f_k.
\end{equation}

Hence, the solution to \eqref{eq:linprob:t:newp} is $\tilde{p}_t$ where $\tilde{p}_t(\Omega_k)=d_k,$ $\tilde{p}_t(t)=\delta_t$ and $\tilde{p}_t(\overline{\Omega}_k\setminus\{t\})=0.$
%

\begin{lemma}\label{lemma:OM}
Let $p_t$ be a solution to \eqref{eq:linprob:t} and $\tilde{p}_t$ to \eqref{eq:linprob:t:newp} for $t\in \overline{\Omega}_k,$ and $t_{OM}$ be given by \eqref{eq:tOM}. If there exists a descent direction among $p_t$ then  $p_{t_{OM}}$ and $\tilde{p}_{t_{OM}}$ are  descent directions. Moreover, $\tilde{p}_{t_{OM}}$ gives the minimum norm of $\|f_k+J_k\tilde{p}_t\|,$ i.e.,
\[
\tilde{p}_{t_{OM}}=\argmin \limits_{\tilde{p}_t} \|f_k + J_k\tilde{p}_t\|.
\]
\end{lemma}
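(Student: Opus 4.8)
The plan is to deduce everything from a single elementary remark together with one residual computation. The remark is: for any direction $p$, if $\|f_k+J_kp\|<\|f_k\|$ then $p$ is a descent direction of $\frac12\|f(x)\|^2$ at $x_k$. Indeed, from
\[
\|f_k+J_kp\|^2=\|f_k\|^2+2f_k^TJ_kp+\|J_kp\|^2<\|f_k\|^2
\]
one gets $2f_k^TJ_kp<-\|J_kp\|^2\le 0$, hence $-p^TJ_k^Tf_k>0$. So it suffices to control the linear residual produced by $t_{OM}$.

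First I would compute that residual for $\tilde p_t$. Since $\tilde p_t$ is supported on $\Omega_k\cup\{t\}$ we have $J_k\tilde p_t=J_k\Pi_k^t\tilde p_t=L_kd_k+J_k(:,t)\delta_t$; using $d_k=-L_k^+f_k$ and the explicit $\delta_t$ from \eqref{eq:delta:t} (and reading $\delta_t=0$, i.e.\ no contribution, when $J_k(:,t)=0$),
\[
f_k+J_k\tilde p_t=\Big(I-\tfrac{aa^T}{\|a\|^2}\Big)(I-L_kL_k^+)f_k,\qquad a=J_k(:,t),
\]
and, since $I-\tfrac{aa^T}{\|a\|^2}$ is an orthogonal projection,
\[
\|f_k+J_k\tilde p_t\|^2=\|(I-L_kL_k^+)f_k\|^2-\frac{\big|f_k^T(I-L_kL_k^+)J_k(:,t)\big|^2}{\|J_k(:,t)\|^2}.
\]
(Equivalently this is the optimal value of the one-dimensional least-squares problem \eqref{eq:delta}.) The first term is independent of $t$, while by \eqref{eq:tOM} the index $t_{OM}$ maximizes exactly the ratio whose square is subtracted; hence $\tilde p_{t_{OM}}$ minimizes $\|f_k+J_k\tilde p_t\|$ over $t\in\overline{\Omega}_k$, which is the ``moreover'' statement.

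For the descent claims I would split on whether $f_k\in\cR(L_k)^\perp$. If $f_k\notin\cR(L_k)^\perp$, then $f_k^TL_kL_k^+f_k>0$, so by \eqref{eq:descent} every $p_t$ — in particular $p_{t_{OM}}$ — is a descent direction; moreover $L_kL_k^+f_k\ne 0$ forces $\|(I-L_kL_k^+)f_k\|<\|f_k\|$ by orthogonality, so the residual identity gives $\|f_k+J_k\tilde p_{t_{OM}}\|<\|f_k\|$ and the opening remark makes $\tilde p_{t_{OM}}$ a descent direction. If $f_k\in\cR(L_k)^\perp$, then $(I-L_kL_k^+)f_k=f_k$, and \eqref{ineq:descent} shows that a $p_t$ is a descent direction precisely when $f_k^T(I-L_kL_k^+)J_k(:,t)\ne 0$ (when this holds the second term of \eqref{ineq:descent} is positive, and when it fails \eqref{eq:descent} gives $-p_t^TJ_k^Tf_k=0$). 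Thus the hypothesis that some $p_t$ is a descent direction means $f_k^T(I-L_kL_k^+)J_k(:,t)\ne 0$ for some $t$, necessarily with $J_k(:,t)\ne 0$; hence the maximum defining $t_{OM}$ is strictly positive and $f_k^T(I-L_kL_k^+)J_k(:,t_{OM})\ne 0$. By \eqref{ineq:descent} this makes $p_{t_{OM}}$ a descent direction, and by the residual identity $\|f_k+J_k\tilde p_{t_{OM}}\|^2=\|f_k\|^2-|f_k^T(I-L_kL_k^+)J_k(:,t_{OM})|^2/\|J_k(:,t_{OM})\|^2<\|f_k\|^2$, so the opening remark again makes $\tilde p_{t_{OM}}$ a descent direction.

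The point I expect to be the main obstacle is recognizing that, unlike MD, the OM rule does \emph{not} maximize the descent $-p_t^TJ_k^Tf_k$ of $p_t$ itself (it maximizes $|f_k^T(I-L_kL_k^+)J_k(:,t)|/\|J_k(:,t)\|$, not $|f_k^T(I-L_kL_k^+)J_k(:,t)|/\|(I-L_kL_k^+)J_k(:,t)\|$), so the proof of Lemma \ref{lemma:MD} cannot simply be mirrored; the resolution is to argue through the residual of $\tilde p_t$ — which OM does optimize — via the remark ``smaller residual $\Rightarrow$ descent direction,'' the remaining care being the case split on $f_k\perp\cR(L_k)$ and the bookkeeping of degenerate columns ($J_k(:,t)=0$ or $J_k(:,t)\in\cR(L_k)$).
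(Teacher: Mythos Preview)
Your proof is correct. For $p_{t_{OM}}$ and for the ``moreover'' claim your argument is essentially the paper's: the same case split on whether $f_k\in\cR(L_k)^{\perp}$ (via Corollary~\ref{cor:desc}/\eqref{eq:descent}), and the same residual computation
\[
\|f_k+J_k\tilde p_t\|^2=f_k^TPf_k-\frac{|f_k^TPa|^2}{\|a\|^2},\qquad P=I-L_kL_k^+,\ a=J_k(:,t),
\]
whose subtracted term is exactly the quantity $t_{OM}$ maximizes.

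The genuinely different step is how you establish that $\tilde p_{t_{OM}}$ is a descent direction. The paper computes the descent quantity directly,
\[
-\tilde p_t^TJ_k^Tf_k=f_k^TL_kL_k^+f_k+\frac{f_k^TPaa^Tf_k}{\|a\|^2},
\]
and argues the second summand is nonnegative by asserting that $Paa^T$ is positive semidefinite. That justification is at best incomplete: $Paa^T$ is not symmetric, and $f_k^TPaa^Tf_k=(f_k^TPa)(a^Tf_k)$ can be negative for individual $t$ (the total is still nonnegative, but for a different reason). You sidestep this entirely by routing through the residual: the opening remark ``$\|f_k+J_kp\|<\|f_k\|\Rightarrow$ descent'' combined with the residual identity and the Pythagorean bound $\|Pf_k\|\le\|f_k\|$ yields the conclusion for $t_{OM}$ in both branches of the case split. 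This is more elementary and robust; what the paper's direct computation would buy, had it been fully justified, is an explicit descent value for every $\tilde p_t$, but since the lemma only concerns $t_{OM}$ your route loses nothing.
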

\begin{proof}
Let $P$ define the orthogonal projections on $\cR (L_k)^{\bot}$, i.e.,$P =I-L_k L_k^+.$
From Corollary \ref{cor:desc},it i seen that $p_{t_{OM}}$ is a descent direction. Indeed, if $f_k \not\in \cR (L_k)^{\perp}$ then any $p_{t}$ gives a descent. Assume that $f_k \in \cR (L_k)^{\perp}.$ Let $p_{t^*},$ $t^* \in \overline{\Omega}_k$ be a descent direction.  Hence,  $J_k(:,t^*) \not\in \cR (L_k),$ that is, $|f_k^TPJ_k(:,t^*)|>0$ which implies $|f_k^T P J(:,t_{OM})| >0.$

Let $t\in \overline{\Omega}_k,$ $a=J_k(:,t)$ and $Q=a a^T/\|a\|^2$ define the orthogonal projections on $R(a).$
To show that $\tilde{p}_{t_{OM}}$ gives a descent we calculate
\[
-\tilde{p}_t^T J_k^T f_k=-(d_k^T,\delta^T)
\begin{pmatrix}
L_k^T f_k\\
a^T f_k
\end{pmatrix}.
\]

Using the formulas for $d_k$ and \eqref{eq:delta:t} we have
\[
-\tilde{p}_t^T J_k^T f_k=f_k^TL_kL_k^+f_k+\dfrac{f_k^TPaa^Tf_k}{\|a\|^2} \geq 0
\]
as $Paa^T$ is positive semi-definite which can be seen by looking at the eigenvalue equation $Paa^Tu = \lambda u$ giving  $\lambda \geq 0$.
Similarly to as above,  $f_k \not\in \cR (L_k)^{\perp}$ implies that $\tilde{p}_t$ is a descent direction for any $t\in \overline{\Omega}_k.$ Assume that this is not the case and $f_k \in \cR (L_k)^{\perp}.$ Then $J(:,t_{OM}) \not \in \cR (L_k)$
which implies $f_k^TPaa^Tf_k/\|a\|^2>0$ for $a=J(:,t_{OM})$ and $\tilde{p}_{t_{OM}}$ gives a descent direction.

 To show that  $\tilde{p}_{t_{OM}}$ provide the minimum norm we compute
\begin{equation*}
\begin{split}
\|f_k+J_k\tilde{p}_t\|^2&=\|Pf_k-Q Pf_k\|^2=f_k^TP(I-Q)Pf_k\\
&=f_k^TPf_k-f_k^TPQPf_k =f_k^TPf_k-\dfrac{|f_k^TPa|^2}{\|a\|^2}\geq 0.
\end{split}
\end{equation*}
The term $f_k^TPf_k$ does not depend on $a$ and the norm $\|f_k+J_k\tilde{p}_t\|$ reaches its minimum when $|f_k^TPa|/\|a\|$ is maximum.
\end{proof}

From Lemma \ref{lemma:OM} it follows that one can use $p_k=\tilde{p}_{t_{OM}}$ instead of $p_k=p_{t_{OM}}.$
However the complexity of this approach would be only $2mn_k$ less  than $OM$. As Lemma \ref{lemma:OM} and Lemma \ref{lemma:MD} imply
\[
\|f_k+J_kp_{t_{MD}}\|\leq
\|f_k+J_k p_{t_{OM}}\|\leq \|f_k+J_k\tilde{p}_{t_{OM}}\|
\]
and we have not seen any real advantages of this approach compared to OM, we do not consider it further.

\subsection{Comparison and generalizations of OM and MD}

There are some interesting common features between MD and OM. In  \eqref{eq:tMD} we notice that the new column is chosen as to maximize the angle between the vectors $f_k$ and $v^t_k =(I-L_kL_k^+) J_k(:,t)/\|(I-L_kL_k^+)J_k(:,t)\|$. Geometrically this means that we choose the column $J(:,t)$ whose projection onto $\cR (L_k)^{\perp}$ is as parallel as possible to the nonlinear residual $f_k$. In OM we instead choose $t_{OM}$ from (\ref{eq:tOM}) which is the maximization of the angle between the linear residual $r_k$ and $J_k(:,t)$. This is the same Orthogonal Mathing principle as for linear problem \cite{TroppWright} but here on the linearized problem $\min_p \|f_k + J_k p\|$.

From a complexity point of view the two methods are comparable if we assume that $N\gg m \gg n_k$ but if $n_k \approx m$ MD will be more expensive since the large term is $\mathcal{O} (m^2 (N-n_k))$ compared to $\mathcal{O} (m(N-n_k))$ using OM.

We note  that when $n_k = \text{rank}(J_k)$ no column will be added and we then choose to remain in the corresponding subspace.

There are some more or less  obvious  variants or generalizations of MD and OM and we mention some here. Firstly, more than one column can be added in every iteration simplifying the algorithm and possibly making it more efficient. Secondly, the search of the columns may not be exhaustive, i.e., as soon as a column is found satisfying the criteria for being added the search can be terminated. Specifically, this is an attractive approach for MD since only sufficient descent is necessary not necessarily maximum descent. Finally, it is  possible to iterate in the corresponding subspace at each step possibly using a line search or any other approach.

\section{Convergence properties}
\label{Sec:con}

The global convergence is given by the following classical theorem that we state here for the sake of completeness. For the reference see Theorem 6.3.3. in \cite{book:DennisSchnabel} or Theorem 14.2.14 in \cite{book:OrtegaRheinboldt}.

\begin{theorem}[Global Convergence of a Descent method]
\label{th:global:main}
Let $F:D \subset \R^N \to \R^1$ be continuously differentiable on the open convex set $D$ and assume that $\nabla F$ satisfy the Lipschitz condition
\[
  \|\nabla F(x) - \nabla F(x) \|_2 \leq \gamma \|x-z\|
\]
for every $x,z \in D$ and some $\gamma>0.$  Given $x_1\in D$ assume that the level set
$\Lambda=\{x \in D\,| \,F(x) \leq F(x_1)\}$ is compact. Consider the sequence $\{x_k\}$  defined by \eqref{eq:linesearch} with $\alpha_k \geq 0$ satisfying the Armijo-Goldstein condition,
and $-p_k^T \nabla F(x_k) > 0$ for all $k \in \N.$ Then $\{x_k\} \in \Lambda$ and
\begin{equation}\label{eq:lim:global}
\lim\limits_{k\to\infty} \dfrac{p_k^T \nabla F(x_k)}{\|p_k\|}=0.
\end{equation}
\end{theorem}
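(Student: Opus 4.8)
The statement is the classical Zoutendijk global convergence theorem (see \cite{book:DennisSchnabel,book:OrtegaRheinboldt}), and the plan I would follow is the standard one. First I would fix a precise form of the Armijo--Goldstein rule, say: there is a constant $\rho\in(0,1/2)$ with
\[
F(x_k) + (1-\rho)\,\alpha_k\, p_k^T\nabla F(x_k) \;\le\; F(x_{k+1}) \;\le\; F(x_k) + \rho\,\alpha_k\, p_k^T\nabla F(x_k),
\]
where $x_{k+1}=x_k+\alpha_k p_k$ and $\alpha_k>0$ (the line search returning a strictly positive admissible step, which is legitimate here because $-p_k^T\nabla F(x_k)>0$). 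The first, elementary step is to show by induction that every iterate lies in $\Lambda$: $x_1\in\Lambda$ by definition, and the right-hand inequality together with $\alpha_k>0$ and $p_k^T\nabla F(x_k)<0$ forces $F(x_{k+1})\le F(x_k)\le F(x_1)$, hence $x_{k+1}\in\Lambda$ (assuming, as one does, that the line search only accepts steps with $x_{k+1}\in D$). Since $F$ is continuous on the compact set $\Lambda$ it is bounded below there, so the nonincreasing sequence $\{F(x_k)\}$ converges to some $F^\ast$, and telescoping gives $\sum_{k}\bigl(F(x_k)-F(x_{k+1})\bigr)=F(x_1)-F^\ast<\infty$.

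The core of the argument is to convert this summable decrease into summability of $\bigl(p_k^T\nabla F(x_k)/\|p_k\|\bigr)^2$. For this I would first extract a lower bound on the step length. On the segment $[x_k,x_{k+1}]$, which lies in $D$ by convexity of $D$ and $x_k,x_{k+1}\in\Lambda\subset D$, the mean value theorem and the Lipschitz bound on $\nabla F$ give $F(x_{k+1})-F(x_k)\le\alpha_k\,p_k^T\nabla F(x_k)+\gamma\,\alpha_k^2\|p_k\|^2$. Comparing this with the left-hand (Goldstein) inequality and cancelling one factor $\alpha_k>0$ yields $\alpha_k\ge\frac{\rho}{\gamma}\bigl(-p_k^T\nabla F(x_k)\bigr)/\|p_k\|^2$. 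Inserting this into the right-hand (Armijo) inequality produces a per-step decrease bounded below by a fixed multiple of the quantity of interest:
\[
F(x_k)-F(x_{k+1})\;\ge\;-\rho\,\alpha_k\, p_k^T\nabla F(x_k)\;\ge\;\frac{\rho^2}{\gamma}\left(\frac{p_k^T\nabla F(x_k)}{\|p_k\|}\right)^{2}.
\]
Summing over $k$ and using the finiteness of $\sum_k\bigl(F(x_k)-F(x_{k+1})\bigr)$ from the first step gives $\sum_k\bigl(p_k^T\nabla F(x_k)/\|p_k\|\bigr)^2<\infty$, so the general term tends to $0$, which is precisely \eqref{eq:lim:global}.

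The telescoping estimate and the Taylor/Lipschitz inequality are routine; the step I expect to be the real obstacle is the derivation of the lower bound $\alpha_k\ge\frac{\rho}{\gamma}\bigl(-p_k^T\nabla F(x_k)\bigr)/\|p_k\|^2$, since it is where the Goldstein (lower) half of the line-search condition and the global Lipschitz constant must be combined, and it requires care about (i) the exact normalisation chosen for the Armijo--Goldstein rule, (ii) verifying that the whole segment $[x_k,x_{k+1}]$ stays in $D$ so the Lipschitz estimate is applicable, and (iii) excluding the degenerate stall $\alpha_k=0$. Once that bound is secured the remainder is immediate, and the scheme is robust enough that any of the usual (Wolfe, Goldstein, backtracking) formulations of the line search can be substituted with only cosmetic changes.
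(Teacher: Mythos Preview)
Your argument is the standard Zoutendijk proof and is correct (up to harmless constants in the Taylor/Lipschitz estimate). However, there is nothing to compare against: the paper does not prove this theorem. It is quoted verbatim as a classical result, with the sentence ``For the reference see Theorem~6.3.3 in \cite{book:DennisSchnabel} or Theorem~14.2.14 in \cite{book:OrtegaRheinboldt}'', and no proof is given.

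So your plan goes well beyond what the paper does. The only remarks worth making are the ones you already flagged yourself: (i) the statement allows $\alpha_k\ge 0$, whereas your lower bound on $\alpha_k$ tacitly assumes $\alpha_k>0$; the usual fix is to note that the Goldstein inequality with $\rho<1/2$ forces a strictly positive admissible step whenever $-p_k^T\nabla F(x_k)>0$, so stalling cannot occur; and (ii) the segment $[x_k,x_{k+1}]\subset D$ step is justified because $\Lambda$ is compact inside the open set $D$, hence has positive distance to $\partial D$, and the Armijo side of the rule keeps $F(x_k+\alpha p_k)\le F(x_k)$ along the accepted step. Neither point is a gap in your outline.
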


Next we show that the algorithm in Section \ref{Sec:Pseudo} with $p_k$ chosen using  MD method or OM has the same convergence properties as the Gauss-Newton method for underdetermined nonlinear problems.

\begin{lemma} \label{lemma:AvsGN}
Let $f$ be given as in \eqref{eq:f=0}, $x_1 \in D$ where $D\subset \R^N$ is a convex open set such that  $\Lambda=\{x\in D \,|\, \|f(x)\|\leq \|f(x_1)\|\}$ is compact. Consider the sequence $\{x_k\}$ given by \eqref{eq:linesearch} with the descent direction $p_k$ chosen using MD  or OM, and $\alpha_k>0$ satisfying the Armijo-Goldstein rule.
If $\rank(J(x))=\rho\leq m$ for all $x \in \Lambda$ then there is $k_\rho\in \N$ such that for $k\geq k_\rho$
\begin{equation} \label{eq:help:2}
-p_k^TJ_k^T f_k=f_k^TJ_k J_k^+ f_k.
\end{equation}
\end{lemma}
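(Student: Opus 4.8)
The plan is to show that once $k$ is large enough, the column selected by MD or OM adds nothing new to the column space of $L_k$, so that $\mathcal{R}((L_k,J_k(:,t_*))) = \mathcal{R}(L_k)$, at which point the right-hand side of \eqref{eq:descent} collapses to $f_k^T L_k L_k^+ f_k$, and moreover $L_k L_k^+ = J_k J_k^+$ because $L_k$ and $J_k$ have the same column space. First I would invoke Theorem \ref{th:global:main} applied to $F = \frac12\|f(x)\|^2$: since each $p_k$ is a descent direction (Corollary \ref{cor:desc}) and the $\alpha_k$ satisfy Armijo–Goldstein, the sequence stays in the compact level set $\Lambda$ and $p_k^T J_k^T f_k / \|p_k\| \to 0$. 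The nonlinear residual $r_k = f_k^T(I-L_k L_k^+)J_k(:,t_*)$-type quantity controls the ``new'' descent term, so this limit forces the second term in \eqref{eq:descent} to zero along the sequence.

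Next I would exploit the monotone growth of $\Omega_k$: since $\Omega_{k+1} = \Omega_k \cup \{t_*\}$ and $\Omega_k \subseteq \{1,\dots,N\}$, the sets $\Omega_k$ are nested and bounded in size by $N$ (indeed by $\rho$, once we observe that a column lying in $\mathcal{R}(L_k)$ contributes zero extra descent and hence is never chosen while a descent direction with strictly positive new term exists). Therefore there is an index $k_\rho$ after which no new column is added, i.e.\ $\Omega_k = \Omega_{k_\rho}$ for all $k \geq k_\rho$, and at that stage $\rank(L_k) = \rank(L_{k_\rho})$. The key structural claim is that this stabilized rank equals $\rho = \rank(J_k)$: if it were strictly less, then by Corollary \ref{cor:desc} there would exist $t \in \overline{\Omega}_k$ with $J_k(:,t) \notin \mathcal{R}(L_k)$, giving a strictly larger descent via \eqref{eq:tMD} or \eqref{eq:tOM}, contradicting the fact that the algorithm stopped adding columns — here I would need the remark made in the text just before Section \ref{Sec:con}, that when $n_k = \rank(J_k)$ no column is added and one stays in the subspace. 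Combined with continuity of $x \mapsto J(x)$ on the compact $\Lambda$ and the hypothesis $\rank(J(x)) \equiv \rho$, this pins down $\mathcal{R}(L_k) = \mathcal{R}(J_k)$ for $k \geq k_\rho$.

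Finally, with $\mathcal{R}(L_k) = \mathcal{R}(J_k)$ the orthogonal projector onto the column space is the same whether built from $L_k$ or $J_k$, so $L_k L_k^+ = J_k J_k^+$; plugging into \eqref{eq:descent} (whose second term vanishes because the added column lies in $\mathcal{R}(L_k)$, making $(I-L_k L_k^+)J_k(:,t_*) = 0$) gives exactly $-p_k^T J_k^T f_k = f_k^T J_k J_k^+ f_k$, which is \eqref{eq:help:2}.

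I expect the main obstacle to be the rank-stabilization argument: one must rule out the pathological possibility that the algorithm keeps cycling or that $t_*$ is repeatedly forced to add a column already in $\mathcal{R}(L_k)$ (contributing zero new descent) before the rank has caught up to $\rho$. The clean way around this is to note that $\Omega_k$ is a nondecreasing finite sequence of index sets, so it must stabilize, and then to use \eqref{eq:tMD}/\eqref{eq:tOM} together with Corollary \ref{cor:desc} to show the stabilized column space cannot be a proper subspace of $\mathcal{R}(J_k)$ — otherwise a descent-improving column exists and would have been picked. The continuity of the rank (lower semicontinuity in general, but here constant by hypothesis) on $\Lambda$ is what lets us treat $\rho$ as a fixed target rather than something varying with $k$.
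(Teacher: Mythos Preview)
Your argument is correct and matches the paper's: both use that $x_k\in\Lambda$ forces $\mathrm{rank}(J_k)\equiv\rho$, that the nondecreasing index sets $\Omega_k$ must stabilise with $\mathrm{rank}(L_k)=\rho$, whence the second term in \eqref{eq:descent} vanishes, and finally that $L_kL_k^+=J_kJ_k^+$. Two cosmetic differences: the limit \eqref{eq:lim:global} you invoke in your first paragraph is not actually needed (the paper draws from Theorem~\ref{th:global:main} only the fact that $x_k\in\Lambda$, and proceeds purely combinatorially as in your second paragraph), and for the last step the paper writes $J_k=(L_k,0)E$ with $E$ a product of elementary column operations to compute $J_kJ_k^+=(L_k,0)EE^{-1}(L_k,0)^+=L_kL_k^+$, which is your ``same column space $\Rightarrow$ same orthogonal projector'' observation in slightly heavier notation.
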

\begin{proof}
Under the conditions of Theorem \ref{th:global:main} $x_k\in \Lambda,$ see 14.2.3 in \cite{book:OrtegaRheinboldt}, and thus $\rank(J_k)=\rho,$ $k \in \N.$ Let $a=J_k(:,t_*)$ where where $t_*=t_{MD}$ or $t_*=t_{OM},$ see \eqref{eq:tMD} and \eqref{eq:tOM}. From Lemma \ref{lemma:descent} and Corollary \ref{cor:desc}, $\rank(L_k)=\rho$  for all $k\geq k_\rho$ for some $k_\rho  \in \N$ and thus, $(I-L_kL_k^+)a=0.$ Hence,  from \eqref{eq:descent} we have
\begin{equation}
\label{eq:help:1}
-p_k^TJ_k^Tf_k=f_k^T L_k L_k^+ f_k.
\end{equation}

Without loss of generality assume $J_k=\Big(L_k,\overline{L}_k\Big)$ and let $E\in \R^{N\times N}$ be a product of elementary matrices such that
\[
J_k=\Big(L_k,\overline{L}_k\Big)=\Big(L_k,0\Big)E.
\]

Then
\[
J_k J_k^+=\Big(L_k,0\Big)E E^{-1}\Big(L_k,0\Big)^+=
\Big(L_k,0\Big)
\begin{pmatrix}
L_k^+\\
0
\end{pmatrix}
=L_kL_k^+
\]
which yields \eqref{eq:help:2}.
\end{proof}

Notice that from Lemma \ref{lemma:AvsGN} the algorithm becomes equivalent to the Gauss-Newton method only starting from some $k_\rho$th iterate, when we already has (hopefully) reached the vicinity of a sparse local minimum of $1/2 \|f\|^2,$ say $x^*.$ This minimum is a solution to $f(x)=0$ if $\rank(J(x^*))=m$ but this is not necessarily the case when $\rank(J(x^*))<m.$
In practice we exclude the convergence to a stationary point $x^*$ giving $\|f(x^*)\|>0$ by restarting the algorithm. We also do a restart when  $p_k$ fails to give a significant descent, see Section \ref{Sec:Pseudo}.

Let $\{x_k\}$ be generated by the Greedy Gauss-Newton method and $\{x_k\} \to x^*$ where $f(x^*)=0.$ Then the convergence rate is quadratic given $\alpha_k=1$ in a vicinity of $x^*$, see \cite{book:DennisSchnabel}. However, from Lemma \ref{lemma:AvsGN} this rate of convergence is only guarantied for $k>k_\rho$. With next proposition we show that this assumption on $k$ can be omitted.

\begin{proposition}[Rate of Convergence]
Let $f$ be given as in \eqref{eq:f=0} and $\hat{x}\in \R^N$ be such that $f(\hat{x})=0.$  Let the sequence $\{x_k\}$ given by \eqref{eq:linesearch} with the descent direction $p_k$ chosen using  MD or OM and $\alpha_k=1$ converges to $\hat{x}$ as $k\to \infty.$ If $\|p_k\|\leq C \|f_k\|$ for all $k\geq K,$ for some $K\in \N,$ then $\{x_k\}$ converges to $\hat{x}$ quadratically.
\end{proposition}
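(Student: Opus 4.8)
The plan is to write down the error recursion for the exact step $\alpha_k=1$ and to show it contracts quadratically once $x_k$ lies near $\hat x$, which happens automatically since $x_k\to\hat x$. First I would fix a compact convex neighbourhood $U$ of $\hat x$ on which $\|J(x)-J(z)\|\le\gamma\|x-z\|$ and $\|J(x)\|\le\bar M$ (available because $f\in C^2$); for $k$ large $x_k\in U$, and we may assume $x_k\neq\hat x$ for all $k$, since otherwise $f_k=0$, $p_k=0$, and the sequence is already at $\hat x$. From $\alpha_k=1$ we have $x_{k+1}=x_k+p_k$, and by construction $p_k=-M_k^+f_k$, where $M_k=J_k\Pi_k^{t_*}$. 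Taylor expanding $f$ about $x_k$ and using $f(\hat x)=0$ gives $f_k=J_k(x_k-\hat x)-R_k$ with $\|R_k\|\le\tfrac{\gamma}{2}\|x_k-\hat x\|^2$.

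Next I would exploit that the active sets $\Omega_k$ are nondecreasing inside $\{1,\dots,N\}$ and hence stabilise, $\Omega_k=\Omega$ for $k\ge k_0$; moreover, whenever $\hat x$ has a nonzero $j$th component, $x_k(j)$ -- which is zero unless $j\in\Omega_k$ -- must eventually be nonzero, so $j$ eventually enters $\Omega_k$, and therefore $\mathrm{supp}(\hat x)\subseteq\Omega$. Since also $\mathrm{supp}(x_k)\subseteq\Omega_k$, for $k\ge k_0$ the vector $x_k-\hat x$ is supported on $\Omega$, which is the range of $\Pi_k^{t_*}$, so $J_k(x_k-\hat x)=M_k(x_k-\hat x)$ and hence
\[
  x_{k+1}-\hat x=(x_k-\hat x)-M_k^+f_k=\bigl(I-M_k^+M_k\bigr)(x_k-\hat x)+M_k^+R_k,
\]
a sum of two orthogonal vectors. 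The decisive point is that the first summand vanishes for all large $k$: every coordinate ever appended to the active set was appended because it produced strict descent, so by Corollary~\ref{cor:desc} it lay outside the span of the then-active columns, and since $J_k\to J(\hat x)$ linear independence is preserved in the limit; thus for $k$ large $M_k$ has full column rank on the range of $\Pi_k^{t_*}$, $I-M_k^+M_k$ kills $x_k-\hat x$, and $\|M_k^+\|\le c_1$ uniformly. This is exactly the reduction that underlies Lemma~\ref{lemma:AvsGN}. Consequently $x_{k+1}-\hat x=M_k^+R_k$, so $\|x_{k+1}-\hat x\|\le\tfrac{c_1\gamma}{2}\|x_k-\hat x\|^2$ for all $k\ge k_\rho$.

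It then remains to cover the finitely many indices $K\le k<k_\rho$, and this is where the hypothesis enters. From $\|p_k\|\le C\|f_k\|\le C\bar M\|x_k-\hat x\|$ we get $\|x_{k+1}-\hat x\|\le(1+C\bar M)\|x_k-\hat x\|$; since these indices are finite in number and $\|x_k-\hat x\|\ge\varepsilon$ for some $\varepsilon>0$ over that range, the bound becomes $\|x_{k+1}-\hat x\|\le\tfrac{1+C\bar M}{\varepsilon}\|x_k-\hat x\|^2$. With $c=\max\{c_1\gamma/2,\;(1+C\bar M)/\varepsilon\}$ we obtain $\|x_{k+1}-\hat x\|\le c\|x_k-\hat x\|^2$ for every $k\ge K$, which together with $x_k\to\hat x$ is quadratic convergence. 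I expect the main obstacle to lie in establishing that the projection term $(I-M_k^+M_k)(x_k-\hat x)$ vanishes -- equivalently, that the greedily selected active columns do not become linearly dependent in the limit and $\|M_k^+\|$ stays bounded. The hypothesis $\|p_k\|\le C\|f_k\|$ by itself only controls $M_k^+$ along the direction of $f_k$, so this ingredient has to be drawn from the column-selection rule together with continuity of $J$, as in the argument behind Lemma~\ref{lemma:AvsGN}.
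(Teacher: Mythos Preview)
Your approach diverges from the paper's in where the hypothesis does its work. The paper's argument is very short: with $A_k = J_k\Pi_k^{t_*}$ one has $p_k = -A_k^+ f_k$, and the assumption $\|p_k\|\le C\|f_k\|$ is read as $\|A_k^+\|\le C$; then Taylor-expanding $0 = f(\hat x) = f_k + A_k(\hat x - x_k) + r_k$, multiplying by $A_k^+$ and invoking $A_k^+A_k = I$ yields $x_k-\hat x = A_k^+f_k + A_k^+r_k$, hence $x_{k+1}-\hat x = A_k^+r_k = O(\|x_k-\hat x\|^2)$. The hypothesis is the sole engine; there is no separate structural analysis of the active set or of column independence.

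Your route is more careful and also more circuitous. The support-stabilisation argument ($\Omega_k$ monotone in a finite set, hence eventually constant, and $\mathrm{supp}(\hat x)\subseteq\Omega$) is precisely what is needed to make the paper's tacit identities $J_k(\hat x - x_k)=A_k(\hat x - x_k)$ and $A_k^+A_k=I$ honest, since both hold only on the active coordinate subspace; that part is a genuine addition. But you then attempt to bound $\|M_k^+\|$ through linear independence of the greedily selected columns rather than through the hypothesis, and you are right to flag this as the obstacle: Corollary~\ref{cor:desc} tells you a new column lay outside the span of the active columns \emph{at the moment of selection and for that particular $J_k$}, not that independence persists as $J_k\to J(\hat x)$, and Lemma~\ref{lemma:AvsGN} does not settle this either. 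The paper bypasses the difficulty entirely by taking $\|A_k^+\|\le C$ straight from the hypothesis---a reading that, as you observe, does not literally follow from $\|p_k\|\le C\|f_k\|$ (which bounds $A_k^+$ only along $f_k$), but is evidently the intended one.

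Your handling of the finitely many indices $K\le k<k_\rho$ is unnecessary: quadratic convergence is an asymptotic statement, so a bound $\|x_{k+1}-\hat x\|\le c\|x_k-\hat x\|^2$ valid for all $k\ge k_\rho$ already suffices, and along your route the hypothesis would not be needed at all once the structural argument goes through.
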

\begin{proof}
Let  $A_k(:,\Omega_k \cup \{t_*\})=J_k(:,\Omega_k \cup \{t_*\})$ and $A_k(:,\overline{\Omega}_k \setminus\{t_*\})=O$ where $t_*=t_{MD}$ or $t_*=t_{OM}.$ Then $p_k=-A_k^+ f_k$ and $\|A_k^+\|\leq C.$ In a vicinity of $\hat{x}$ the Taylor expansion is valid
 \[
 f(\hat{x})=f(x)+J_k(\hat{x}-x_k)+r(x_k)=f_k+A_k(\hat{x}-x_k)+r_k
 \]
 with $r_k=O(\|x-\hat{x}\|^2)$ as the Hessian is continuous and thus uniformly bounded in a closed neighbourhood of $\hat{x}.$

 We have
  \[
 A_k^+f(\hat{x})=A_k^{+}f_k+A_k^{+}A_k(\hat{x}-x_k)+A_k^{+}r(x).
 \]
 Remembering that $f(\hat{x})=0$ and $A_k^{+}A_k=I$ we obtain
 \[
 x_k-\hat{x}=A_k^{+}f_k+A_k^{+}r(x).
 \]

 Next,
 \[
 x_{k+1}-\hat{x}=(x_k-\hat{x})-A_k^{+}f_k=A_k^{+}r(x)=O(\|x_k-\hat{x}\|^2)
 \]
which completes our proof.
\end{proof}
%

\subsection{The Greedy Gauss-Newton Algorithm in pseudocode}
\label{Sec:Pseudo}

Below we outline the algorithm we use in our numerical tests.
For the values of the constants in step 1. we refer to the numerical tests in Section \ref{Sec:num}. The parameter
$k_{max}$ stands for the maximum number of iterations (counting throughout restarts), $\varepsilon_f$, $\delta_x$, $\delta_{\alpha}$, $tol$, and $\Delta_{grad}$ are tolerances.

In step 14. the sign $"\circ"$ stands for the Hadamard product and $\rm{rand}(N,1)$ returns a vector of $N$ uniformly distributed random numbers in the interval $(0,1),$ and $prob \in (0,1].$

The merit function $\phi(\alpha)$ in step 10. is given as $ \phi(\alpha)= \| f(x_k + \alpha p_k)\|_2^2/2$.
\\ \\

\noindent
{\bf Greedy Gauss-Newton Algorithm}\\[2mm]
Predefined functions are $f : \R^{N}\rightarrow \R^{m}$ and Jacobian $J(x):\R^{N}\rightarrow \R^{m\times N} $, $m<N$ \\[2mm]
1.  \indent  \indent \indent {\bf Input}: $k_{max}$, $\varepsilon_f$, $\delta_x$, $\delta_{\alpha}$, $tol$, $\Delta_{grad}$, $prob$\\
2. ‎ \indent  \indent \indent $k=1$, $x_1 = 0$, $\Omega_1 = \emptyset$, $n_{restarts}=0$\\
3.  \indent  \indent \indent {\bf while} $\| f(x_k) \| > \varepsilon_f$ and  $k<k_{max}$\\
4.  \indent  \indent \indent  \indent Find $t_{max}$ from (\ref{eq:tMD}) if MD or (\ref{eq:tOM}) if OM (or any other method)\\	
5. \indent  \indent \indent  \indent {\bf if} the maximum in (\ref{eq:tMD}) or (\ref{eq:tOM}) respectively is larger  than $tol$\\
6. 	 \indent  \indent 	\indent \indent  \indent Set $\Omega_{k+1} = \Omega_k \cup t_{max}$\\
	  \indent \indent  \indent 	\indent \indent	{\bf else}\\
7. \indent  \indent 	\indent \indent  \indent Set $\Omega_{k+1} = \Omega_k$\\
	 \indent \indent  \indent 	\indent \indent  {\bf end}\\
8.  \indent  \indent \indent  \indent Compute  $p_k = -J(:,\Omega_{k})^+f(x_k)$\\
9.	 \indent \indent \indent \indent Find $\alpha_k$ using the merit function $\phi (\alpha)$ \\
10.	 \indent \indent  \indent  \, Set  $x_{k+1} = x_k + \alpha_k p_k$\\
11.	\indent  \indent \indent \, {\bf if} $\alpha_k<\delta_{\alpha}$ or $\| J_k^T f_k \|/ \| f_k \|< \Delta_{grad}$\\
12.  	 \indent  \indent       \indent \indent \, $n_{restarts} =n_{restarts} +1$\\
13.		 \indent  \indent 		\indent \indent   \, Set $x_{k+1}  =(2\rm{rand}(N,1)-1)\circ(\rm{rand}(N,1)<prob)$\\
14.		 \indent  \indent 	\indent \indent  \,  Update $\Omega_{k+1} = \left\{ i:|x_k(i)|>\delta_x \right\}$ \\
  \indent \indent  \indent \indent  \indent  {\bf end}  \\
15.		 \indent  \indent \,  Update $k=k+1$\\
  \indent \indent  \indent 	 \indent {\bf end}\\
16.   \indent \indent  \indent  \,   Update $\Omega_{k+1} = \left\{ i:\,|x_k(i)|>\delta_x \right\}$ and  $x_{k+1}( \overline{\Omega}_{k+1})=0$\\
17.  \indent  \,   {\bf Output}: Solution to $f(x)=0$ or  if $k=k_{max}$ the vector $x_{k_{max}}$\\

A restart, see step 14., is performed if either the step length is too small indicating not enough descent, or if the gradient is small while the norm of $f$ is not small, see step 12. The first case appears when the Gauss-Newton method does not converge locally, i.e., the solution has a large residual $f$ and/or a small curvature, see \cite{ErWeGuSo2005} for details. The second case for a restart may occur when the algorithm is converging to a local minima where the norm of $f$ is not close to zero.

In the next section we use  $k_{max}=200,$  $\delta_x=10^{-8},$ $\varepsilon_f=10^{-13},$ $\delta_\alpha=10^{-3}$, $tol=10^{-10},$ and $\Delta_{grad} = 10^{-16}$. The other constants vary for different problems and are given below.

%
%
%
%

\section{Numerical tests}
\label{Sec:num}

We test our method on three different problems where the solution space is known. The first is a small problem that is considered in \cite{Ku}. The second and the third one have quadratic and exponential nonlinearities, respectively. These are large problems which size can be changed.
We illustrate the results from both qualitative and quantitative point of view and test the algorithm versus $\ell_1$-method described in \cite{Ku}.

\subsection{Small test problem} \label{Sec:num:small}

Let $f$ in \eqref{eq:f=0} be given as
\[
f(x)=Ax+\phi(x)-y
\]

where
\[
A=
\left(
 \begin{array}{rrrrrrrr}
-3.933& 0.107& 0.126& 0& -9.99& 0& -48.83& -7.64\\
    0& -0.987& 0& -22.95& 0& -28.37& 0& 0\\
    0.0002& 0& -0.235& 0& 5.67& 0& -0.921& -6.51\\
    0 &1 &0 &-1 &0 &-0.168 &0 &0\\
    0 &0& -1& 0& -0.196& 0& -0.0071& 0
    \end{array}
 \right),
\]
\[
\phi(x)=
 \left(
 \begin{array}{r}
 -0.727 x(2)x(3)+8.39x(3)x(4)-684.4x(4)x(5)+63.5x(4)x(7)\\
        0.949x(1)x(2)-1.578x(1)x(4)-1.132x(4)x(7)\\
        -0.716x(1)x(2)-1.578x(1)x(4)+1.132x(4)x(7)\\
        -x(1)x(5)\\
        x(1)x(4)
 \end{array}
 \right),
\]
\[
 y = (0.999, -1.4185, -0.5670, -0.0084, 0.0196)^T.
 \]
 We run the $\ell_1$-method and both  MD and OM starting with $x_1=0 \in \R^8.$
 It turns out that for this set up  MD and OM are equivalent.

 All the methods converged to the same sparse solution $\hat{x}=(0,0,0,0,-0.1,0.05,0,0)^T.$ After three iterations we obtained $\|f(x_3)\|<1e-15$.
 Below we print the matrix $X_{l_1}=(x_1,x_2,x_3)$ where $x_k,$ $k=1,2,3,$ are the iterates obtained using the $\ell_1$-method
  \[X_{l_1}=
  \left(
 \begin{array}{rrr}
            0  & -1.94e-15&  5.70e-16\\
            0  & 1.64e-14&   8.53e-15\\
            0  & 1.21e-15&  -2.03e-17\\
            0  & 1.91e-14&   9.99e-15\\
            0  &-0.1&  -0.1\\
            0  & 0.05&   0.05\\
            0  & 2.38e-15&   1.44e-14\\
            0  &-5.83e-15&  -2.42e-15
           \end{array}
 \right),
 \]
  and  $X=(x_1,x_2,x_3)$ with $x_k,$ $k=1,2,3,$ obtained using OM (or MD)
 \[
 X =
 \left(
 \begin{array}{rrr}

            0            &0            &0\\
            0            &0&            0\\
            0            &0&            0\\
            0            &0&            0\\
            0            &0&  -0.1\\
            0           &0.05&   0.05\\
            0            &0&            0\\
            0            &0&            0
           \end{array}
 \right).
 \]

The matrices above give a good illustration of the difference between the two algorithms. In particular, the choice of the parameter $\delta_x$ plays more significant role for the $\ell_1$ -method then for the Greedy Gauss-Newton algorithm. Moreover, the maximum sparsity of a solution obtained by the Greedy Gauss-Newton algorithm not grater than $m,$ which can not be guaranteed by the $\ell_1$-method.

\subsection{Quadratic test problem} \label{Sec:Quadratic}

Consider the quadratic function
\begin{equation}
\label{eq:qp}
    f(x) =
		A(x-\bar{x}) +\frac{1}{2}
		\left(
		\begin{array}{c}
		    (x-\bar{x})^T H_1(x-\bar{x})\\
				(x-\bar{x})^T H_2(x-\bar{x})\\
				\vdots\\
				(x-\bar{x})^T H_m(x-\bar{x})
    \end{array}
		\right),
\end{equation}
where $A, \, H_i\in \R^{N\times N}$, $i=1,...,m.$

Let $s, n$ be such that $1\leq s<n+s\leq N$ and
\[
   Q = \left( Q_1,  Q_2 \right), Q_1 \in \mathbb{R}^{(n+s)\times n}, Q_2 \in  \mathbb{R}^{(n+s)\times s}, Q^TQ = I.
\]
We define
\[
    A = \left( BQ_1^T,C \right), \, \mbox{ and } \,
    H_i =
		\left(
		\begin{array}{cc}
		    Q_1T_iQ_1^T & S_i\\
				S_i^T & R_i
    \end{array}
		\right)
\]
where 
$B,$ $C,$ $T_i,$ $S_i,$ and $R_i,$ $i=1,...,m,$ are all random matrices of the corresponding sizes whose elements are uniformly distributed in $(-1,1)$. We assume that $\bar{x}$ is $(n+s)$ - sparse with $(n+s)$ first non-zero elements.
Let $\bar{z}=\bar{x}(1:n+s)$ then any $x$ such that
\begin{equation} \label{eq:x:qp}
x-\bar{x}=
\begin{pmatrix}
z-\bar{z}\\
0
\end{pmatrix}
=
\begin{pmatrix}
Q_2y\\
0
\end{pmatrix}, \quad y \in \R^s,
\end{equation}
is a solution to \eqref{eq:qp}.
Moreover, as one can always find $y \in \R^s$ such that $z=Q_2y+\bar{z}$ has additional $s$ zeros, we conclude that there are solutions $x$ of sparsity $n.$

The Jacobian, $J(x)\in \R^{m\times N}$,  of $f$ is given by
\[
  J_{ij}(x) = a_{ij} + e_j^TH_i (x-\bar{x}), \, i=1, \ldots , m,\,  j=1, \ldots N, 
\]
where $e_j$ is the $j$'th unit vector, and $f''_i = H_i$.

Thus, for $x$ as in \eqref{eq:x:qp} we obtain
\[
J(x)=(BQ_1^T, C+J_{12}), \quad J_{12}=e_j^T S_i^TQ_2 (y-\bar{y})
\]
which most probably has  rank $m.$

All the tests we run with $N=100$, $m=20$, $s=2$, $prob=0.02$ and with the constants given in Section \ref{Sec:Pseudo}.

In Figures \ref{fig:TestQ1:MD} - \ref{fig:TestQ3}
we demonstrate the qualitative behaviour of the Greedy Gauss-Newton method and compare it with the $\ell_1$-method.
In Figure \ref{fig:TestQ1:MD} we show the results of the algorithm for solving
\eqref{eq:qp} using MD with  $n=6.$ In particular, we plot the absolute value of the solution $x$ obtained using MD, and the minus absolute value of the solution obtained using the $\ell_1$ - method in Figure \ref{fig:TestQ1:MD} (left upper).
The sparsity of the solution obtained by MD is equal to $n=6$ and the sparsity of the solution obtained by the $\ell_1$-method is $56.$ In Figure \ref{fig:TestQ1:MD} (right lower) one can see which columns of $J_k$ were added at each iteration step $k=1,2,...$.
We plot  $\|f(x_k)\|$ in logarithmic scale  in Figure \ref{fig:TestQ1:MD} (right upper) and the size of $\Omega_k$ in Figure \ref{fig:TestQ1:MD} (right lower) at each iteration.

The same test problem as in Figure \ref{fig:TestQ1:MD} is then solved  using OM. We display the results in  Figure
\ref{fig:TestQ1:OM}. Note that the solution with OM is not the same as the one for MD even if the sparsity is the same.

\begin{figure}[h!]
\centerline{
\includegraphics[width=1.1\textwidth]{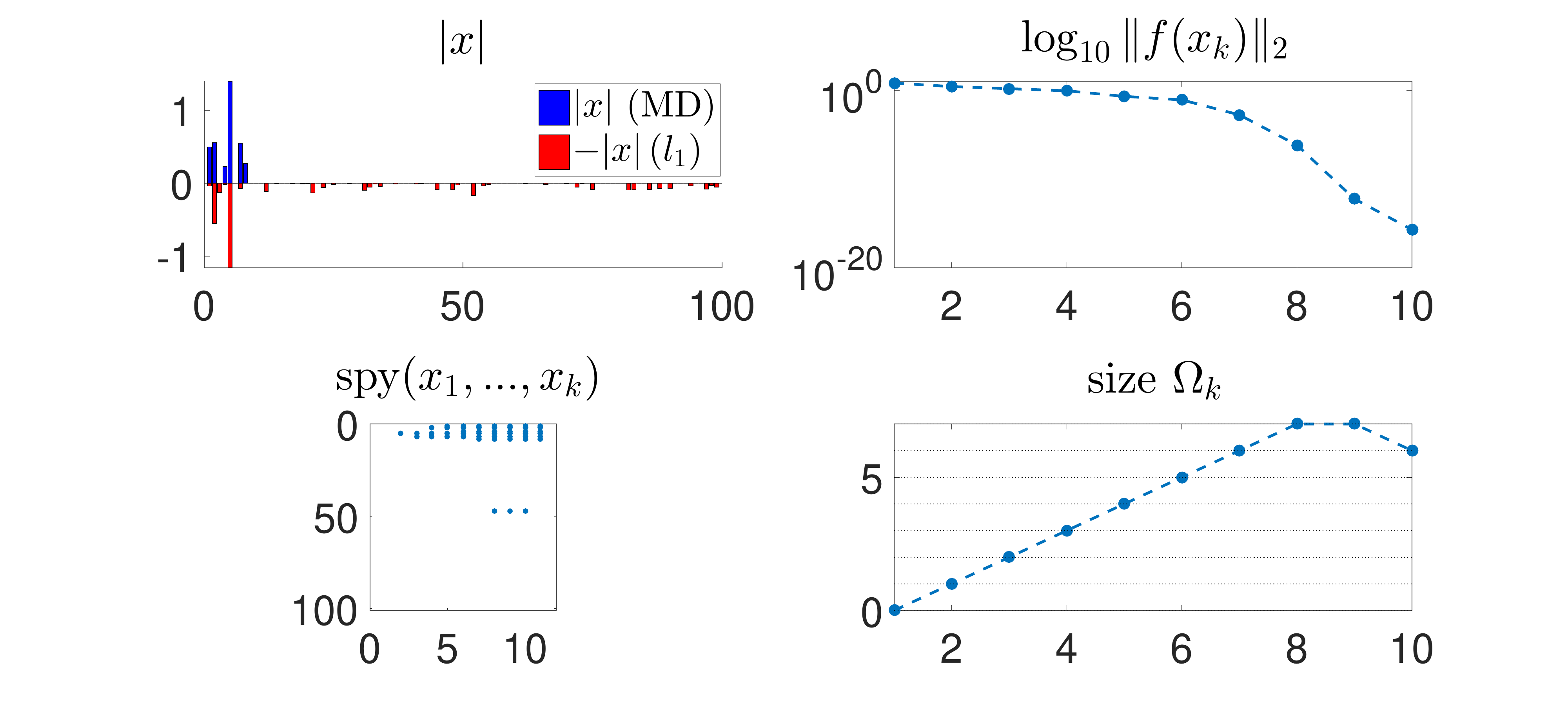}}
\caption{ MD method performance for the test problem \eqref{eq:qp} with $N=100,$ $m=20,$ $n=6$ and $s=2$}
\label{fig:TestQ1:MD}
\end{figure}

\begin{figure}[h!]
\centerline{
\includegraphics[width=1.1\textwidth]{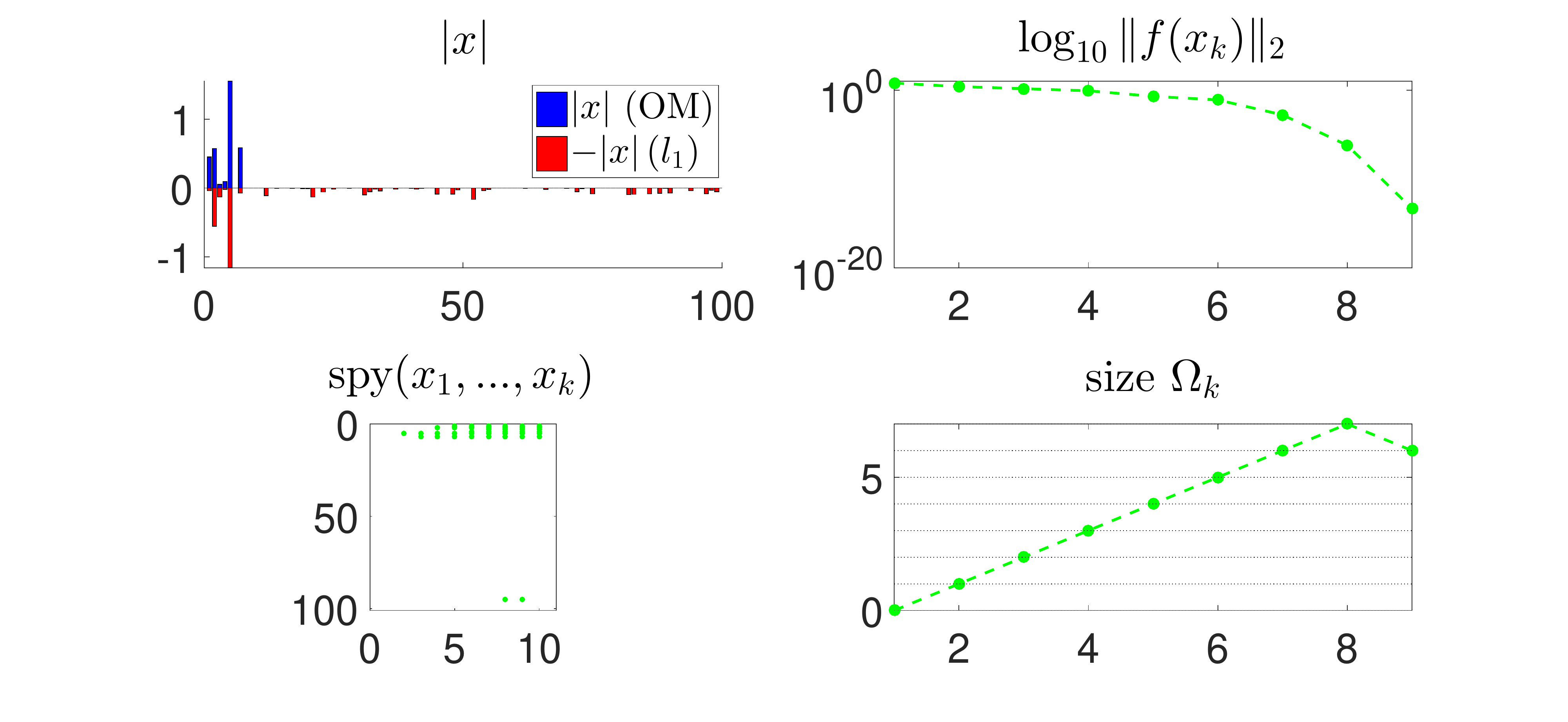}}
\caption{MD method performance for the test problem \eqref{eq:qp} with $N=100,$ $m=20,$ $n=6$ and $s=2.$}
\label{fig:TestQ1:OM}
\end{figure}

 For the chosen parameters the convergence to a sparse solution, as in Figure \ref{fig:TestQ1:MD} and Figure \ref{fig:TestQ1:OM}, is the most common case.  However, the algorithm may not produce a convergent (to the solution) sequence starting with $x_1=0$, see Figure \ref{fig:TestQ2}, or produce an m-sparse solution, as in Figure \ref{fig:TestQ3}.

 In Figure \ref{fig:TestQ2} (right upper), one can see an example of the case when the algorithm got stuck in a subspace with a local minimum to  $\|f(x)\|^2/2$ that does not yield a solution to $f(x)=0.$ The rank of the Jacobian at these minima are  equal to $18,19,19$ which can be seen from Figure \ref{fig:TestQ2} (right lower). The algorithm converged to a sparse solution after three (different) restarts.  We have plotted the absolute value of the solution and the minus absolute value of the solution of sparsity $56$ obtained by the $\ell_1$- method in Figure \ref{fig:TestQ2} (left upper). In Figure \ref{fig:TestQ2} (left lower) the subspace of the local minimum and the subspace of the solution are shown.

  Finally, in Figure \ref{fig:TestQ3} we show the case where the algorithm does not find a sparse solution but converges to a solution of the sparsity $m,$ $m=20.$ The sparsity of the solution obtained by $\ell_1$-method is equal to $54,$ see Figure \ref{fig:TestQ3} (lower left).

 Since we have not found significant difference in the qualitative behaviour between OM and MD we have displayed the results for the last two tests only for MD.

\begin{figure}[h!]
\centerline{
\includegraphics[width=1.1\textwidth]{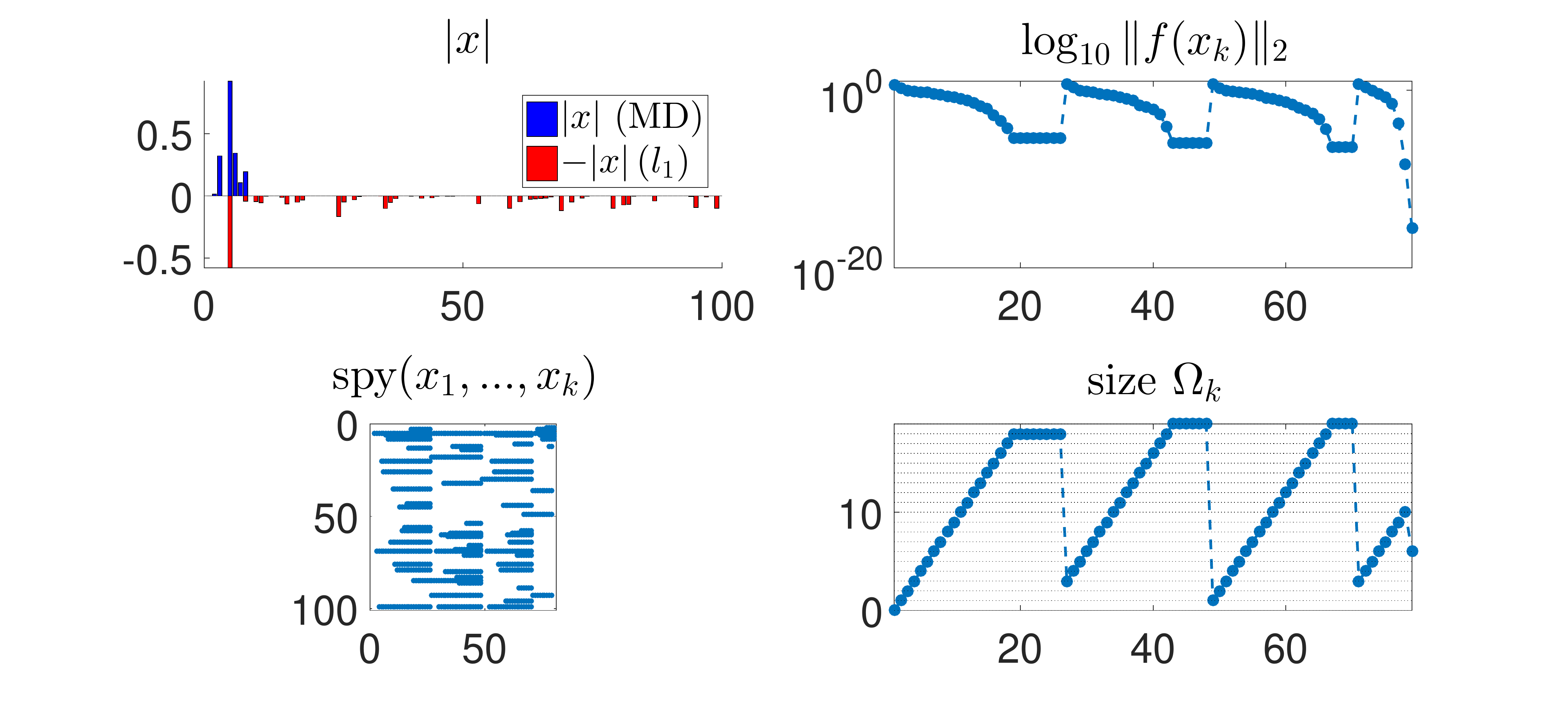}}
\caption{MD method performance for the test problem \eqref{eq:qp} with $N=100,$ $m=20,$ $n=6$ and $s=2$}
\label{fig:TestQ2}
\end{figure}

\begin{figure}[h!]
\centerline{
\includegraphics[width=1.1\textwidth]{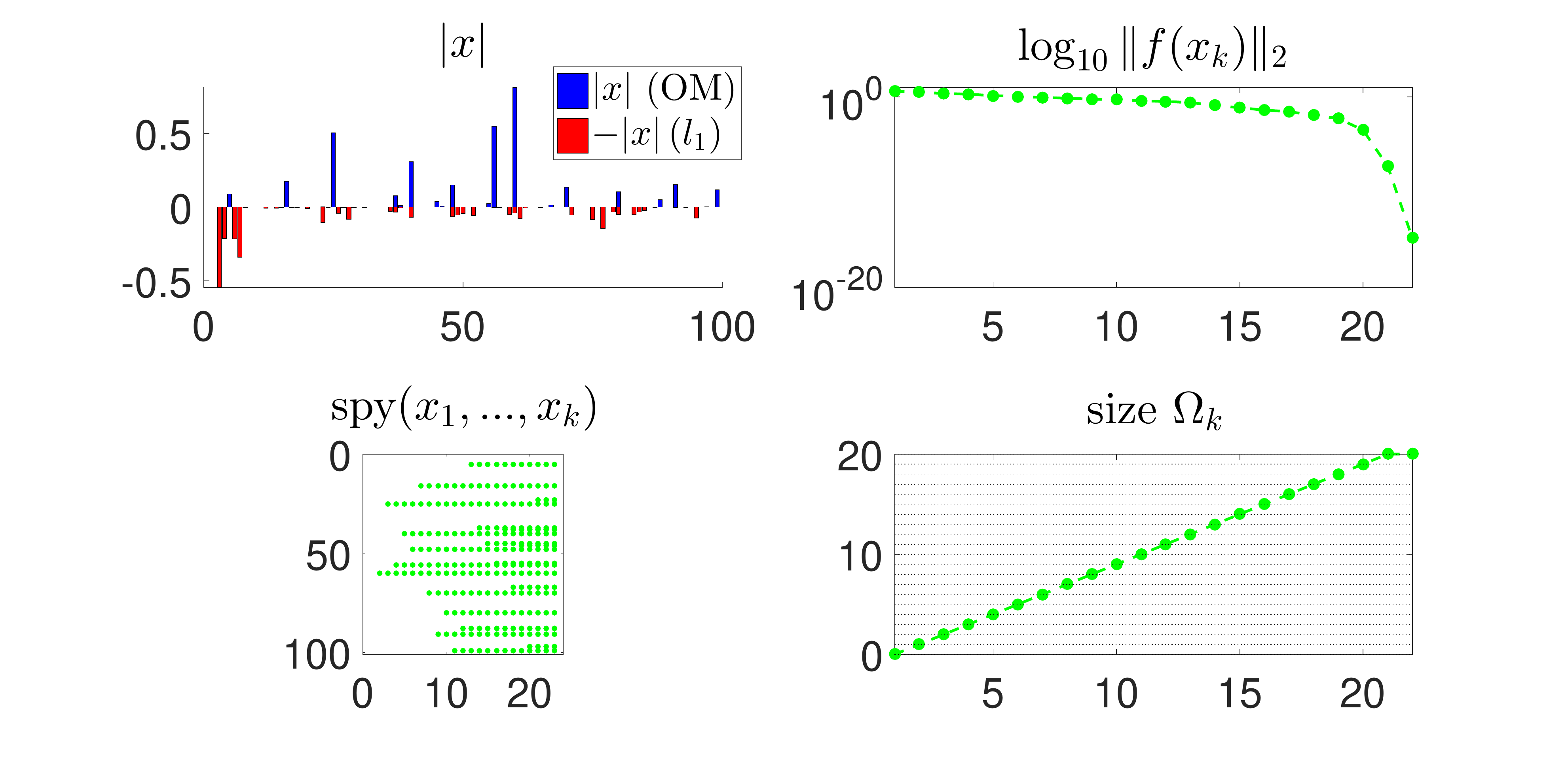}}
\caption{MD method performance for the test problem \eqref{eq:qp} with $N=100,$ $m=20,$ $n=6$ and $s=2$.}
\label{fig:TestQ3}
\end{figure}

We would like  note that while solutions obtained by the Greedy Gauss-Newton method can not exceed $m,$ the $\ell_1$-method
may produce a solution of even larger sparsity than $m$, which was the case for the considered test problem \eqref{eq:qp} for all our runs.

In Figure \ref{fig:LargeTest_MD_OM_3D} and  \ref{fig:LargeTest_MD_OM_proj} we illustrate the performance of the algorithm over the average of $10$ runs where $N=100$, $m$ and $n$ vary as $m=8,10,\ldots, 98$ and $n=2,4,\ldots,m-6.$.

The upper two plots in Figure \ref{fig:LargeTest_MD_OM_3D} show that the sparsity $n$ of the solution is attained except for a curved ridge. It has been shown  in \cite{Tropp2008} that for linear problems orthogonal matching pursuit can provably recover $n$-sparse signals when $n \leq m/(2\log (N))$. This estimate is illustrated by the cutting plane in the figures. It is seen that MD and OM manage to find less sparse solutions than the estimate. In the lower right plots in  Figure \ref{fig:LargeTest_MD_OM_3D} and  \ref{fig:LargeTest_MD_OM_proj} it is seen that MD outperforms OM for most problem sizes. The number of restarts were insignificantly small for these tests.

\begin{figure}[h!]
\centerline{
\includegraphics[width=1.1\textwidth]{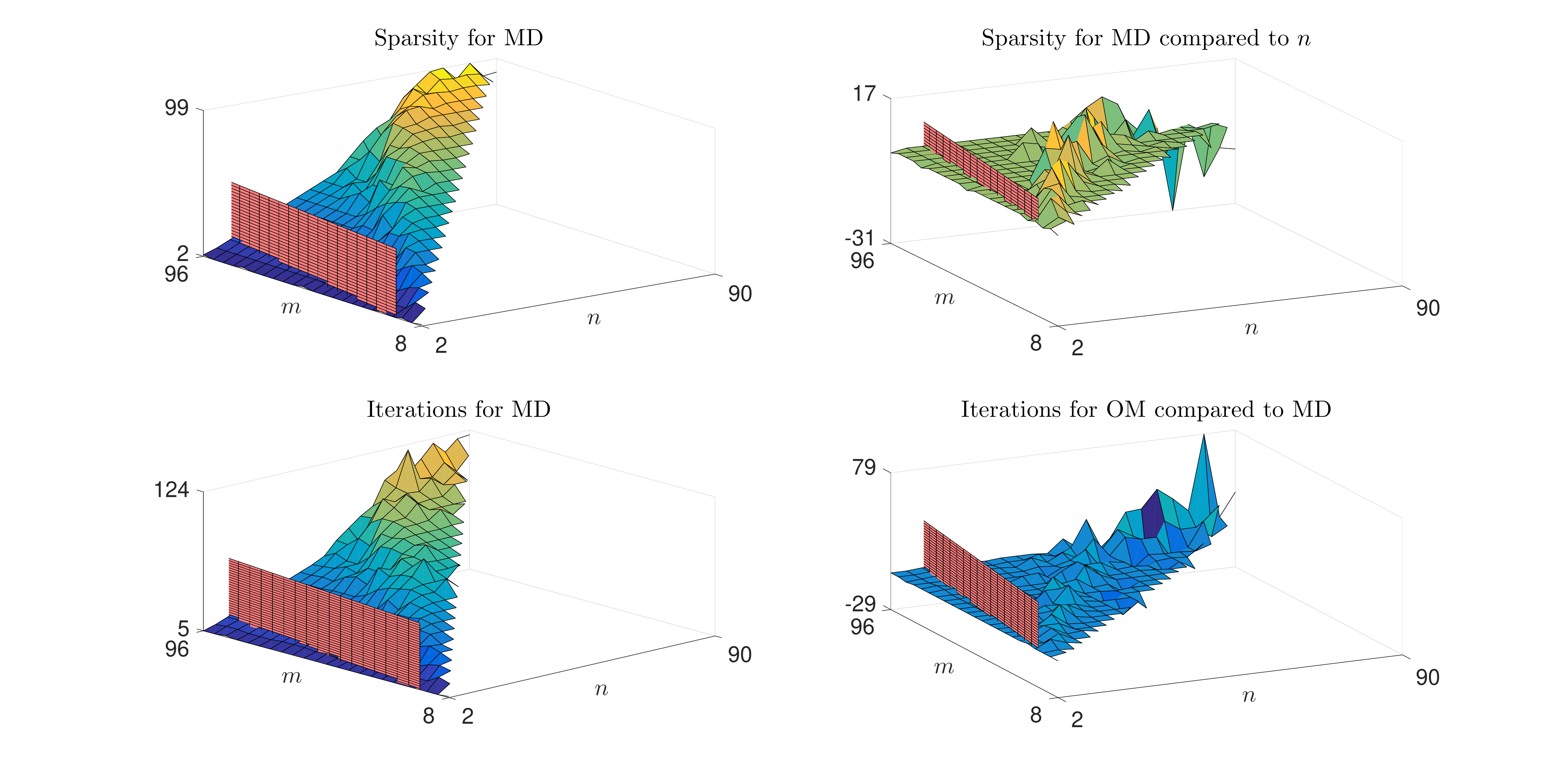}}
\caption{The 3D plots of the performance of MD and OM methods  for the test problem \eqref{eq:qp} over the average of $10$ runs, $N=100,$ $s=2,$ and $ m=8,10, \ldots, 98$, $n=2,4, \ldots,m-6$.}
\label{fig:LargeTest_MD_OM_3D}
\end{figure}

\begin{figure}[h!]
\centerline{
\includegraphics[width=1.1\textwidth]{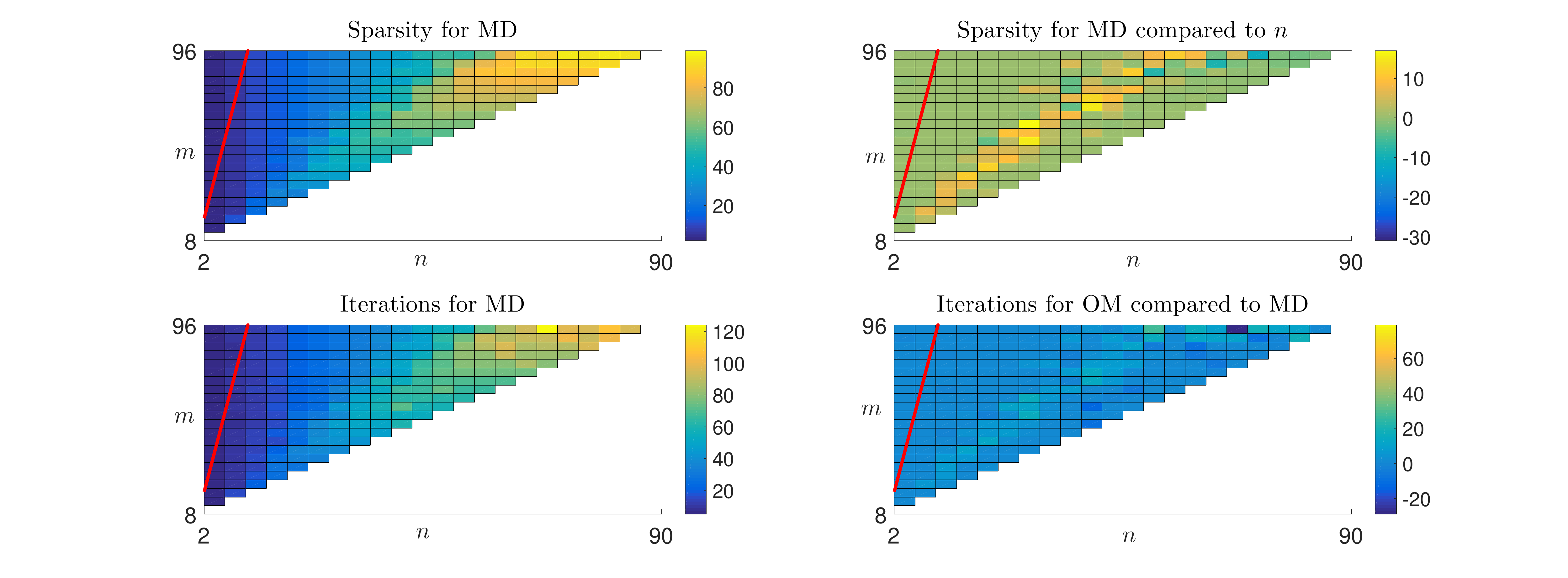}}
\caption{The contour plots of the performance of MD and OM methods  for the test problem \eqref{eq:qp} over the average of $10$ runs, $N=100,$ $s=2,$ and $ m=8,10, \ldots, 98$, $n=2,4, \ldots,m-6$.}
\label{fig:LargeTest_MD_OM_proj}
\end{figure}
%
%

\subsection{Exponential Test Problem}

This problem is taken from \cite{Ma89}.

Define

\begin{equation}\label{eq:ep}
   f(x) = Ae^{Bx}-b,\, A \in \mathbb{R}^{m\times N}, \, b\in \mathbb{R}^{m},\, e^x = \left(e^{x_1}, \ldots, e^{x_N}\right)^T
\end{equation}
where the elements in $A$ are chosen random uniformly in $(-1,1)$ and then by using Singular Value Decomposition to have $\rank (A)=m-p$. The matrix $B$ is constructed in the following way. First, we generate $N \times N$ random matrix whose elements are uniformly distributed in $[-1,1].$ Next, using Singular Value Decomposition we fix this matrix to have the first $n+s<m$ columns to have the rank $n$ for some $n,s \in \N$.
That is, $\rank (B(:,1:n+s))=n$ and $B$ most probably has the rank $N-s.$

We choose $\bar{x}=(\bar{z},0)^T$ with some $\bar{z}\in \R^{n+s}$ and set $b=A \exp(B\bar{x}).$
Then for any $y\in \R^s$
\begin{equation} \label{eq:x:ep}
x=\bar{x}+\begin{pmatrix} V_2y\\ 0\end{pmatrix}
\end{equation}
solves $f(x)=0$ with $V_2\in \R^{(n+s)\times s}$ such that $\cR(V_2)= \cN(B(:,1:n+s)).$ From this construction it is clear that some of $x$ among \eqref{eq:x:ep} have the sparsity $n.$

The Jacobian and second derivatives are given as
\[
   J(x) = A \, \text{diag}\left(e^{x_1}, \ldots , e^{x_N}\right) = A \, \text{diag}(e^x), \quad  f''_i = \text{diag}\left(a_{i1}e^x_1, \ldots , a_{iN}e^{x_N}\right), \quad i=1,...,m.
\]
where $a_{ij},$ $j=1,...,N,$ are the elements of $A.$

The matrix $J(x)$ is always rank deficient.  Indeed, since $ A \, \text{diag}\left(e^{x_1}, \ldots , e^{x_N}\right) $ has the same rank as $A$ we have
\begin{equation*}
\begin{split}
   \rank(J(x))  \leq \min \left\{ \rank(A), \rank(B) \right\} = \min  \left\{ m-p, N-s \right\}
\end{split}
\end{equation*}

\begin{figure}[h!]
\centerline{
\includegraphics[width=1.1\textwidth]{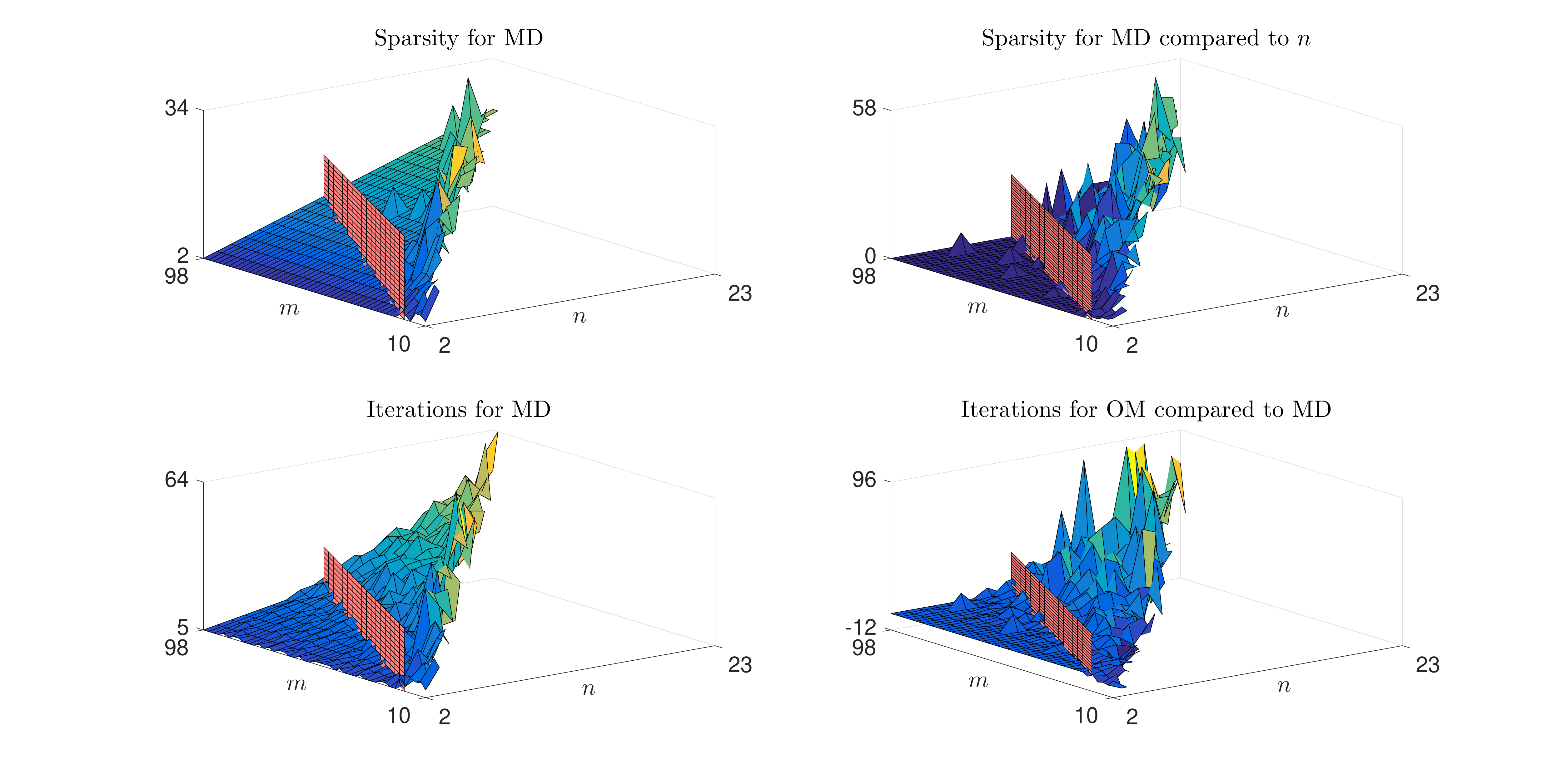}}
\caption{
The performance of MD and OM methods for the test problem with \eqref{eq:ep} over the average of $10$ runs, $N=100,$ $s=4,$ and $ m=12,16, \ldots, 96$, $n=2,6, \ldots,m-10$.}
\label{fig:LargeTest_Exp_3D}
\end{figure}

\begin{figure}[h!]
\centerline{
\includegraphics[width=1.1\textwidth]{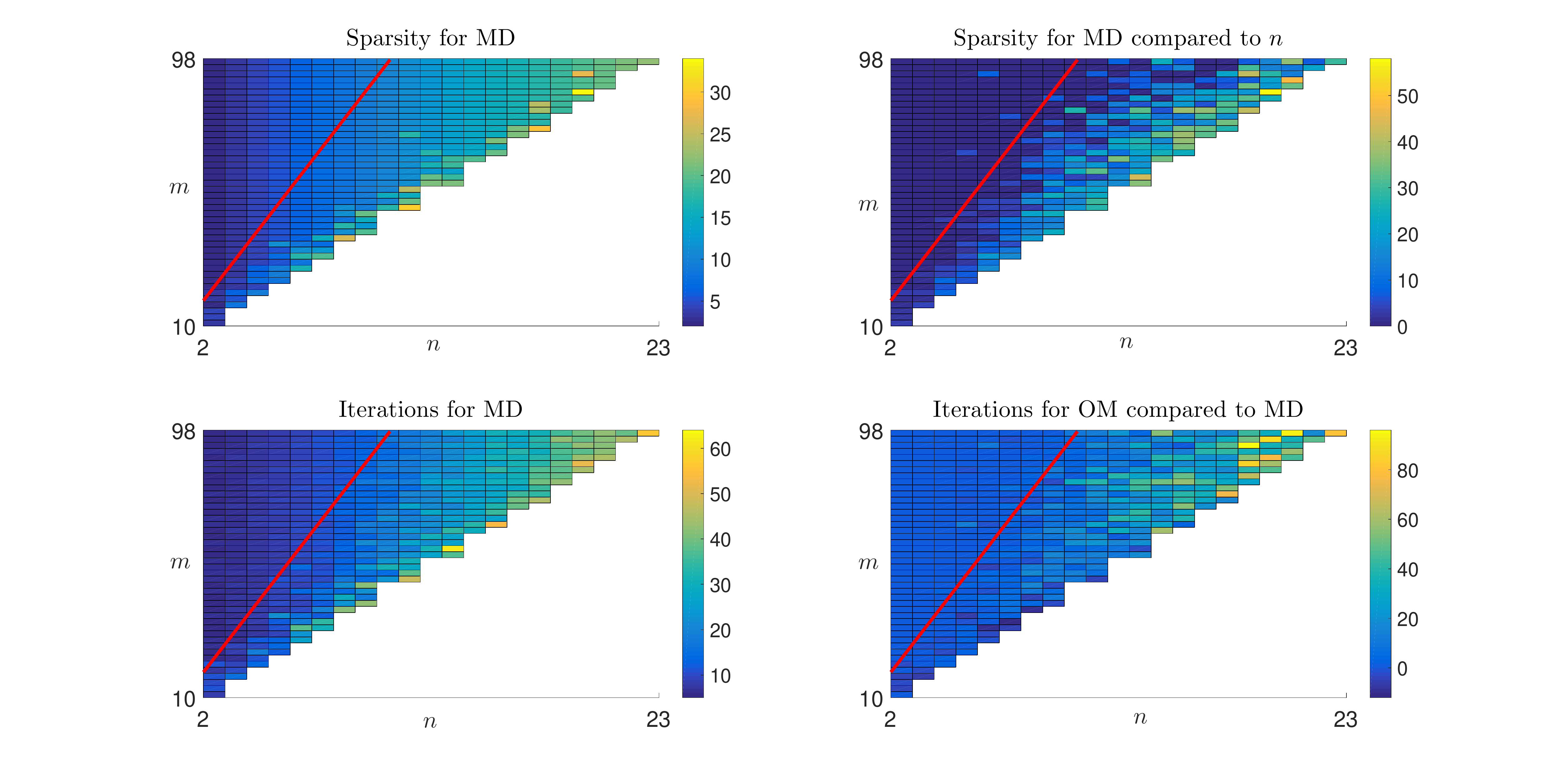}}
\caption{The performance of MD and OM methods for the test problem with \eqref{eq:ep} over the average of $10$ runs, $N=100,$ $s=4,$ and $ m=12,16, \ldots, 96$, $n=2,6, \ldots,m-10$.}
\label{fig:LargeTest_Exp_proj}
\end{figure}

%

All the tests were run with $N=100$, $s=2,$   $prob = (2+m/10)/100$ and the constants given in see Section \ref{Sec:Pseudo}. Furthermore, an additional condition for a restart, $\max_i | x_k(i) | >10^3$, is added in the condition of the if-statement on row 11 in the pseudocode to prevent convergence to infinity.

In Figure \ref{fig:LargeTest_Exp_3D} and  \ref{fig:LargeTest_Exp_proj} we illustrate the performance of the algorithm over the average of $10$ runs where $N=100, s=4$, $m, n$ vary as $ m=12,16, \ldots, 96$, $n=2,6, \ldots,m-10$.

The upper right  plots in Figure \ref{fig:LargeTest_MD_OM_3D} and \ref{fig:LargeTest_Exp_proj} show that the sparsity of the solution is attained very close to the estimate  $n \leq m/(2\log (N))$ obtained for linear problems. We however do not have theoretical justification of this estimate for nonlinear cases.  Figure \ref{fig:LargeTest_MD_OM_3D} (lower right) and \ref{fig:LargeTest_Exp_proj} (lower right) shows that MD outperforms OM for all problem sizes. The number of restarts for this test problem were more frequent than for the quadratic test problem, see Section \ref{Sec:Quadratic}. However, there were few cases when $m\approx n$ and $m$ is large, where there was no convergence.

\vspace{3cm}

\bibliographystyle{unsrt}

\end{document}